\DeclareMathOperator{\im}{Im}
\begin{document}

\def\PB{\color{purple}}
\def\RB{\color{blue}}
\def \red {\textcolor{red} }

\newcommand{\spec}{\hat{p}}
\newcommand{\mb}{\mathbf}

\newtheorem{theorem}{Theorem}[section]
\newtheorem{proposition}[theorem]{Proposition}
\newtheorem{observation}[theorem]{Observation}
\newtheorem{assumption}[theorem]{Assumption}
\newtheorem{conjecture}[theorem]{Conjecture}
\newtheorem{question}[theorem]{Question}
\newtheorem{claim}[theorem]{Claim}
\newtheorem{lemma}[theorem]{Lemma}
\newtheorem{corollary}[theorem]{Corollary}
\theoremstyle{definition}
\newtheorem{definition}[theorem]{Definition}
\newtheorem{construction}[theorem]{Construction}
\newtheorem{example}[theorem]{Example}
\newtheorem{remark}[theorem]{Remark}
\newcommand{\s}[1]{\left(#1\right)}

\def\epsilon{\varepsilon}
\def\st{\colon}
\def\cat{\colon\!}
\def\FF{{\mathbb F}}
\def\NN{{\mathbb N}}
\def\VV{{\mathbb V}}
\def\UU{{\mathbb U}}
\def\ZZ{{\mathbb Z}}
\def\brm#1{{\rm #1}}
\def\to{\rightarrow}
\def\CL#1{\left\lceil #1\right\rceil}
\def\FL#1{\left\lfloor #1\right\rfloor}
\def\C#1{\left| #1\right|}
\def\VEC#1#2#3{#1_{#2},\ldots,#1_{#3}}
\def\VECOP#1#2#3#4{#1_{#2}#4\cdots#4#1_{#3}}
\def\SE#1#2#3{\sum_{#1=#2}^{#3}}
\def\UE#1#2#3{\bigcup_{#1=#2}^{#3}}
\def\esub{\subseteq}
\def\ang#1{\langle #1\rangle}
\def\nul{\varnothing}
\def\cC{{\mathcal C}}
\def\cB{{\mathcal B}}
\def\bI{{\bf I}}
\def\bT{{\bf T}}
\def\la{\langle}
\def\ra{\rangle}
\def\ov{\overline}
\long\def\skipit#1{}

\title{Strong parity edge-colorings of graphs}

\author{
Peter Bradshaw\thanks{University of Illinois; {\tt pb38@illinois.edu.}
Research supported by NSF RTG grant DMS-1937241.}\,,
Sergey Norin\thanks{McGill University; {\tt snorin@math.mcgill.ca.}
Research supported by an NSERC Discovery Grant.}\,,
Douglas B. West\thanks{Zhejiang Normal University and University of
Illinois; {\tt dwest@illinois.edu.}  Research supported by National Natural
Science Foundation of China grants NSFC 11871439, 11971439, and U20A2068.}
}

\maketitle
\begin{abstract}
An \emph{edge-coloring} of a graph $G$ assigns a color to each edge of $G$.
An edge-coloring is a \emph{parity edge-coloring} if for each path $P$ in $G$,
it uses some color on an odd number of edges in $P$.  It is a
\emph{strong parity edge-coloring} if for every open walk $W$ in $G$, it
uses some color an odd number of times along $W$.  The minimum numbers of
colors in parity and strong parity edge-colorings of $G$ are denoted
$p(G)$ and $\spec(G)$, respectively.

We characterize strong parity edge-colorings and use this characterization to
prove lower bounds on $\spec(G)$ and answer several questions of Bunde, Milans,
West, and Wu.  The applications are as follows.
(1)~We prove the conjecture that $\spec(K_{s,t})=s \circ t$, where $s \circ t$
is the Hopf--Stiefel function.
(2)~We show that $\spec(G)$ for a connected $n$-vertex graph $G$ equals the
known lower bound $\lceil \log_2 n \rceil$ if and only if $G$ is a subgraph of
the hypercube $Q_{\lceil \log_2 n \rceil }$.
(3)~We asymptotically compute $\spec(G)$ when $G$ is the $\ell$th
distance-power of a path, proving $\spec(P_n^\ell)\sim\ell\CL{\log_2 n}$.
(4)~We disprove the conjecture that $\spec(G)=p(G)$ when $G$ is bipartite by
constructing bipartite graphs $G$ such that $\spec(G)/p(G)$ is arbitrarily
large; in particular, with $\spec(G)\ge\frac{1-o(1)}3 k\ln k$ and
$p(G)\le2k+k^{1/3}$.
\end{abstract}
\section{Introduction}
We study special edge-colorings of graphs, where an \emph{edge-coloring} of a
graph assigns a color to each edge.  Given an edge-coloring, a path $P$ 
is a \emph{parity path} if each color appears an even number of times on $P$.
Similarly, a walk $W$ is a \emph{parity walk} if each color appears an even
number of times along a traversal of $W$.
A \emph{parity edge-coloring (pec)} of a graph $G$ is an edge-coloring that
admits no parity path.  Similarly, a \emph{strong parity edge-coloring (spec)}
of $G$ is an edge-coloring that admits no open parity walk (a walk is
\emph{open} when its endpoints are distinct; otherwise it is \emph{closed}).
The \emph{parity edge chromatic number}, written $p(G)$,
is the minimum number of colors in a parity edge-coloring of $G$,
and the \emph{strong parity edge chromatic number}, written $\spec(G)$,
is the minimum number of colors in a strong parity edge-coloring of $G$.
Note that every spec is also a pec.  Also every pec is a proper edge-coloring
(meaning that incident edges have distinct colors), since incident edges with
the same color would form a parity path.  Hence every graph $G$ satisfies
\[
\chi'(G) \leq p(G) \leq \spec(G),
\]
where the \emph{edge-chromatic number} $\chi'(G)$ is the minimum number of
colors in a proper edge-coloring.  The notions of pec and spec were introduced
by Bunde, Milans, West, and Wu~\cite{BMWW, BMWW2}.

Every pec of a graph $G$ is a \emph{nonrepetitive edge-coloring}, which is an
edge-coloring $\phi$ such that
$(\phi(e_1),\dots,\phi(e_k)) \ne (\phi(e_{k+1}),\dots,\phi(e_{2k}))$
whenever $e_1,\ldots,e_{2k}$ in order form a path with even length.
The special case where $G$ is a path was introduced by 
Thue~\cite{Thue} under the name \emph{square-free words}.
Alon, Grytczuk, Ha\l uszczak, and
Riordan~\cite{AGHR} later extended the concept to general graphs. 
Since a nonrepetitive edge-coloring is a proper edge-coloring,
the minimum number of colors in a nonrepetitive edge-coloring of a graph
$G$ is between $\chi'(G)$ and $p(G)$.
Rosenfeld \cite{Ros} showed that if $G$ is a graph with maximum degree
$D$, then $G$ has a nonrepetitive edge-coloring using at most $D^2 + O(D^{5/3})$
colors; in contrast, $p(G)$ must grow with the number of vertices when $G$
is connected.

In particular, it was proved in~\cite{BMWW} that
$\spec(G) \geq p(G) \geq  \lceil \lg n \rceil$
for every $n$-vertex connected graph $G$ (we write $\lg$ for $\log_2$).
This result was motivated by the study of embeddings of graphs in hypercubes.
Havel and Mor\'avek \cite{HM} characterized connected subgraphs of
hypercubes.  In our language, they proved that a connected graph $G$ is a
subgraph of the $k$-dimensional hypercube $Q_k$ if and only if $G$ has a pec
$\phi$ using at most $k$ colors such that for each cycle $C$ in $G$, no color
appears in $C$ an odd number of times.  Since the natural edge-coloring of
$Q_k$ using color $j$ on edges whose endpoints are binary $k$-tuples differing
in coordinate $j$ is a spec, every subgraph $G$ of $Q_k$ satisfies
$\spec(G)\leq k$.  As a corollary, a tree $T$ embeds in $Q_k$ if and only if
$p(T) \leq k$.  Monotonicity of the parameters and this fact about spanning
trees yield the following:
\begin{proposition}\label{prop:subQk}
For every $n$-vertex connected graph $G$,
\[
\spec(G)\ge p(G)\ge\lceil\lg n\rceil.
\]
In addition, if $G$ is an $n$-vertex connected subgraph of $Q_k$ and
$2^{k-1}<n\le2^k$, then equality holds throughout.
\end{proposition}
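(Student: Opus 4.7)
The proposition has two parts: the chain of inequalities $\spec(G)\ge p(G)\ge\lceil\lg n\rceil$, and the equality statement for subgraphs of $Q_k$. The plan is to deduce the inequality chain from monotonicity of the two parameters together with the Havel--Mor\'avek characterization of tree-subgraphs of hypercubes quoted in the text, and then derive the equality case from monotonicity together with the natural spec of $Q_k$ that was described just before the proposition.

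For the inequality chain, the first inequality $\spec(G)\ge p(G)$ is immediate since every spec is by definition a pec. For the second, I first observe monotonicity: if $H$ is a subgraph of $G$, then restricting a pec of $G$ to $E(H)$ yields a pec of $H$, because every path of $H$ is a path of $G$. Hence $p(H)\le p(G)$. Now let $T$ be a spanning tree of the $n$-vertex connected graph $G$; by monotonicity $p(T)\le p(G)$. Since $T$ has no cycles, the Havel--Mor\'avek condition reduces to the existence of a pec, so the corollary stated in the text gives $T\esub Q_k$ if and only if $p(T)\le k$. Because $T\esub Q_k$ forces $n=|V(T)|\le 2^k$, the smallest such $k$ is $\lceil\lg n\rceil$, giving $p(T)\ge\lceil\lg n\rceil$ and hence $p(G)\ge\lceil\lg n\rceil$.

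For the equality case, suppose $G$ is a connected $n$-vertex subgraph of $Q_k$ with $2^{k-1}<n\le 2^k$, so that $\lceil\lg n\rceil=k$. The natural edge-coloring of $Q_k$ described in the text (color $j$ on edges whose endpoints differ in coordinate $j$) is a spec using $k$ colors, so $\spec(Q_k)\le k$. Using the analogous monotonicity for $\spec$ (restricting a spec of $Q_k$ to $E(G)$ remains a spec of $G$, since every open walk of $G$ is an open walk of $Q_k$), we obtain $\spec(G)\le\spec(Q_k)\le k$. Combining this with the already-proved inequality chain yields
\[
k\ \ge\ \spec(G)\ \ge\ p(G)\ \ge\ \lceil\lg n\rceil\ =\ k,
\]
forcing equality throughout.

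There is no substantial obstacle here: the argument is essentially a bookkeeping combination of the two facts explicitly invoked in the preceding paragraph of the excerpt (the Havel--Mor\'avek characterization, and the natural spec of $Q_k$) with the monotonicity of $p$ and $\spec$ under taking subgraphs. The only mildly subtle point is to record both monotonicity statements, and the only point where one must be careful is to note that the Havel--Mor\'avek ``no cycle gets a color an odd number of times'' condition is vacuous for trees, which is what allows the spanning-tree reduction to succeed.
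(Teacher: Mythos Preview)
Your proof is correct and follows exactly the approach the paper sketches: the sentence ``Monotonicity of the parameters and this fact about spanning trees yield the following'' preceding the proposition is precisely the outline you have fleshed out. Both the spanning-tree reduction via Havel--Mor\'avek for the lower bound and the restriction of the natural spec of $Q_k$ for the equality case match the paper's intended argument.
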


Proposition \ref{prop:subQk} suggests the following natural question:
\begin{question}[\cite{BMWW}]\label{q:log}
Which $n$-vertex connected graphs $G$ satisfy $\spec(G)=p(G)=\lceil\lg n\rceil$?
\end{question}

One of our results in this paper answers Question \ref{q:log} by showing that
the sufficient condition in Proposition \ref{prop:subQk} is also necessary.
That is:

\begin{theorem}\label{lgn}
If $G$ is a connected graph on $n$ vertices, then 
$\spec(G) = \lceil \lg n \rceil$ if and only if $G$ is a subgraph of the hypercube $Q_{\lceil \lg n \rceil}$.
\end{theorem}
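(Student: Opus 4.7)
The ``if'' direction is Proposition~\ref{prop:subQk}, so assume $\spec(G)=k:=\CL{\lg n}$ and fix a spec $\phi\st E(G)\to[k]$ together with a root $r\in V(G)$. For any walk $W$ in $G$, let $w(W)\in\FF_2^k$ denote its \emph{parity vector}, whose $c$th coordinate records the parity of the number of edges of color $c$ in $W$; the hypothesis that $\phi$ is a spec is exactly that $w(W)\ne 0$ for every open walk $W$. My strategy is to produce an injective map $f\st V(G)\to\FF_2^k$ that sends the endpoints of each edge of color $c$ to a pair of vectors differing in coordinate $c$, since such an $f$ is exactly an embedding of $G$ into $Q_k$.

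Let $H\le \FF_2^k$ be the subgroup generated by $\{w(C):C\text{ a closed walk at }r\}$. Because any two $r$-to-$v$ walks combine into a closed walk at $r$, the assignment $\bar f(v):=w(P_v)\bmod H$, for any walk $P_v$ from $r$ to $v$, is a well-defined map $\bar f\st V(G)\to\FF_2^k/H$. To see that $\bar f$ is injective, suppose $\bar f(u)=\bar f(v)$ for distinct $u,v$, and let $W=P_u^{-1}P_v$ be the $u$-to-$v$ walk obtained by going from $u$ back to $r$ and then out to $v$; then $w(W)\in H$, so $w(W)=\sum_{i=1}^j w(C_i)$ for some closed walks $C_i$ at $r$. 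Splicing each $C_i$ into $W$ at its visit to $r$ produces an open $u$-to-$v$ walk with parity vector $0$, contradicting that $\phi$ is a spec.

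Injectivity of $\bar f$ yields $n\le 2^k/|H|$. Since $k=\CL{\lg n}$ we have $n>2^{k-1}$, so $|H|<2$ and hence $H=\{0\}$. Therefore $w(C)=0$ for every closed walk $C$, the map $f(v):=w(P_v)$ is a well-defined injection $V(G)\to\FF_2^k$, and each edge of color $c$ has endpoints whose images differ by $e_c$. Thus $f$ embeds $G$ as a subgraph of $Q_k$.

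The only moderately delicate step is the closed-walk splicing used to establish injectivity of $\bar f$, and it is precisely here that the spec hypothesis (as opposed to merely pec) is essential. After that, the bound $n>2^{k-1}$ forces $H$ to be trivial, and the parity-vector map delivers the desired hypercube embedding essentially for free.
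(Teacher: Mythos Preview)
Your proof is correct and follows essentially the same route as the paper's second proof of Theorem~\ref{lgn}: your subgroup $H$ is exactly the paper's $S_r$, your injectivity of $\bar f$ is Lemma~\ref{lem:disjoint}, and your counting inequality $n\le 2^k/|H|$ is Lemma~\ref{lem:Sr}, whence $H=\{0\}$. The only cosmetic differences are that you package the coset machinery inline rather than as separate lemmas, and in the final step you build the hypercube embedding directly via $f(v)=w(P_v)$ instead of invoking the Havel--Mor\'avek characterization; incidentally, since concatenations of closed walks at $r$ are again closed walks at $r$, your $H$ is already the \emph{set} $\{w(C):C\text{ closed at }r\}$, so a single splice suffices in the injectivity step.
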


Our proof of Theorem~\ref{lgn} uses an algebraic characterization of specs that
is our main result and has other applications.  
For clarity, we use the term \emph{edge-coloring} for functions defined on
the edges of a graph and use \emph{labeling} for functions defined on the
vertices.  A \emph{binary labeling} of a graph $G$ is an injective function
$f$ of the form $f\st V(G)\to \VV$, where $\VV$ is a binary vector space
(that is, over $\FF_2$). 
A \emph{canonical edge-coloring} of $G$ is an edge-coloring $\phi$ that arises
from some binary labeling $f$ of $G$ by setting $\phi(uv)=f(u)+f(v)$ for each
edge $uv\in E(G)$, where the addition is vector addition in $\VV$.  Under a
slightly more restricted notion of canonical edge-coloring,
Bunde, Milans, West, and Wu \cite{BMWW2} observed the following:

\begin{observation}[\cite{BMWW2}]\label{obs:canon}
Every canonical edge-coloring is a spec.
\end{observation}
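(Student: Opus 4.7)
The plan is to exploit the fact that canonical edge-colors live in a characteristic-$2$ vector space, so that summing the color vectors along any walk telescopes cleanly. Write $\phi$ for the canonical edge-coloring of $G$ arising from an injective binary labeling $f\st V(G)\to \VV$. The goal is to show that no open walk in $G$ can be a parity walk under $\phi$.

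First I would fix an arbitrary open walk $W=v_0,v_1,\ldots,v_k$ with $v_0\ne v_k$ and compute the sum in $\VV$ of the color vectors of the edges of $W$. Each internal vertex $v_i$ (for $1\le i\le k-1$) appears as the endpoint of exactly two consecutive edges of $W$, so its label $f(v_i)$ contributes twice and cancels over $\FF_2$, yielding
\[
\sum_{i=1}^{k}\phi(v_{i-1}v_i) \;=\; \sum_{i=1}^{k}\bigl(f(v_{i-1})+f(v_i)\bigr) \;=\; f(v_0)+f(v_k).
\]

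Next I would assume for contradiction that $W$ is a parity walk, so that each color used along $W$ is used an even number of times. Grouping the left-hand sum above by color class, one sees it is a sum of an even number of copies of each fixed vector in $\VV$, hence equals $0$. This forces $f(v_0)=f(v_k)$, contradicting the injectivity of $f$ together with $v_0\ne v_k$; so $\phi$ admits no open parity walk and is therefore a spec.

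The ``hard part'' is essentially vacuous: once the definition of canonical edge-coloring is unpacked, the observation reduces to a one-line telescoping identity in $\FF_2$-arithmetic. The only subtlety worth flagging is that a color appearing $2m$ times along a walk really does contribute $2m$ copies of the \emph{same} vector in $\VV$ (colors are vectors, not abstract symbols), so the sum does vanish in characteristic $2$.
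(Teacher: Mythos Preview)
Your proof is correct and follows essentially the same approach as the paper: compute the sum of edge-colors along a walk in two ways (telescoping via $f$ versus grouping by color class) to conclude that a parity walk forces $f(v_0)=f(v_k)$, contradicting injectivity when the walk is open.
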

\begin{proof}
Let $\phi$ be a canonical edge-coloring, and let $f$ be an associated binary
labeling that generates $\phi$.  Since every color appears an even number of
times along a parity walk $W$ from $u$ to $v$, the sum of the colors of the
edges along a traversal of $W$ is $0$.  On the other hand, when $y$ is an
internal vertex of $W$ the contribution of $f(y)$ to the colors on the edges
entering and leaving $y$ along $W$ cancels in the sum, so the sum of the colors
along $W$ equals $f(u)+f(v)$.  Since $f$ is injective, $f(u)+f(v)\ne 0$.  Hence
there is no open parity walk.
\end{proof}

To characterize specs, we consider refinements of canonical edge-colorings.
An edge-coloring $\phi$ is a \emph{refinement} of an edge-coloring $\phi^*$ if
$\phi$ is obtained from $\phi^*$ by splitting color classes into smaller sets.
In particular, if $\pi$ is the map that combines sets of colors under $\phi$
into colors under $\phi^*$, then $\phi^*$ is the composition $\pi\circ \phi$.
Every refinement $\phi$ of a spec $\phi^*$ is also a spec, since when an odd
number of occurrences of a color (on an open walk) are grouped into subclasses,
an odd number of those subclasses must have odd size.  Our main result is the
following theorem, which is a partial converse of Observation~\ref{obs:canon}.

\begin{theorem} \label{thm:canonical_intro}\label{main}
Every spec $\phi$ of a connected graph $G$ is isomorphic to a refinement of a
canonical edge-coloring.  That is, there is a canonical edge-coloring $\phi^*$
and a renaming $\pi$ of colors such that $\phi^*=\pi\circ\phi$.
In particular, $\spec(G)$ is the minimum number of colors in a canonical
edge-coloring of $G$.
\end{theorem}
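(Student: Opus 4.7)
The plan is to construct, for a given spec $\phi \colon E(G) \to C$, an injective vertex labeling $f \colon V(G) \to \VV$ into an $\FF_2$-vector space so that the canonical edge-coloring $\phi^*(uv) = f(u)+f(v)$ induced by $f$ factors as $\pi\circ\phi$ for some renaming $\pi$ of colors. The right choice of $\VV$ is the free $\FF_2$-space on $C$ modulo the ``color-sums of cycles.'' Formally, extend $\phi$ to an $\FF_2$-linear map $\psi \colon \FF_2^{E(G)} \to \FF_2^C$ sending the indicator of edge $e$ to the basis vector $\mathbf{1}_{\phi(e)}$, let $Z \subseteq \FF_2^{E(G)}$ be the cycle space (the kernel of the vertex-boundary map), and set $N = \psi(Z)$ and $\VV = \FF_2^C / N$.

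Fix a base vertex $r \in V(G)$. For each $v \in V(G)$ choose any walk $W_v$ from $r$ to $v$ (possible since $G$ is connected) and let $\chi_W \in \FF_2^{E(G)}$ denote the mod-$2$ edge-indicator of a walk $W$. Define $f(v) = \psi(\chi_{W_v}) + N \in \VV$; this is independent of the choice of $W_v$ because two walks between the same endpoints differ by an element of $Z$. Define $\pi \colon C \to \VV$ by $\pi(c) = \mathbf{1}_c + N$. For any edge $uv$ the chain $\chi_{W_u} + \chi_{W_v} + \chi_{uv}$ has trivial boundary, hence lies in $Z$, and so $f(u) + f(v) = \psi(\chi_{uv}) + N = \pi(\phi(uv))$. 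Thus the canonical coloring $\phi^*$ induced by $f$ equals $\pi\circ\phi$, exhibiting $\phi$ as a refinement of $\phi^*$.

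The main step—and the only place the spec hypothesis enters—is the injectivity of $f$. Suppose for contradiction that $f(u) = f(v)$ with $u \ne v$, and let $P$ be any walk from $u$ to $v$; then $\psi(\chi_P) \in N$, so $\psi(\chi_P) = \psi(z)$ for some $z \in Z$. Decompose $z = \chi_{C_1} + \cdots + \chi_{C_k}$ as a sum of indicators of simple cycles, and for each $i$ pick a walk $Q_i$ from $u$ to some vertex of $C_i$. Form the open walk
\[
W \;=\; Q_1\, C_1\, Q_1^{-1}\; Q_2\, C_2\, Q_2^{-1}\;\cdots\; Q_k\, C_k\, Q_k^{-1}\; P
\]
from $u$ to $v$; its mod-$2$ edge-indicator is $\chi_P + z$, because each round trip $Q_i Q_i^{-1}$ contributes $0$. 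Then $\psi(\chi_W) = \psi(\chi_P) + \psi(z) = 0$, meaning every color appears an even number of times along the open walk $W$, contradicting the spec property of $\phi$.

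The final assertion of the theorem is then immediate: Observation~\ref{obs:canon} shows that every canonical edge-coloring is a spec, so the minimum number of colors in a canonical edge-coloring is at least $\spec(G)$; conversely the construction above converts any spec using $\spec(G)$ colors into a canonical edge-coloring using no more colors. The main obstacle I anticipate is exactly the ``detour'' realization in the injectivity step: the spec property a priori constrains only honest walks, not arbitrary $\FF_2$-chains with boundary $\mathbf{1}_u + \mathbf{1}_v$, so one must explicitly encode an arbitrary cycle-space element as an open walk from $u$ to $v$ without changing the endpoints.
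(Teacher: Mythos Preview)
Your proof is correct and follows essentially the same route as the paper. The paper encodes colors as atoms in the binary space $\UU$, defines $S_v=\{\phi(W):W\text{ an }r,v\text{-walk}\}$, shows $S_r$ is a subspace with the $S_v$ as its cosets, and sets $\phi^*(uv)=[\phi(uv)]$ in the quotient; your $\FF_2^C$, $N=\psi(Z)$, and $f(v)=\psi(\chi_{W_v})+N$ are exactly these objects rewritten in cycle-space language, and your injectivity argument (realizing an element of $Z$ as a detour through cycles) is the contrapositive of the paper's Lemma~\ref{lem:disjoint}.
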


\noindent
The renaming of colors when expressing a spec $\phi$ as a refinement of a
canonical edge-coloring $\phi^*$ need not be injective; multiple colors
under $\phi$ may collapse to a single color under $\phi^*$.

An {\it optimal spec} of a graph $G$ is a spec using $\spec(G)$ colors.
By Theorem~\ref{thm:canonical_intro} and the use of binary vector spaces
in canonical edge-colorings, finding an optimal spec is equivalent to the
problem of mapping $V(G)$ injectively into the vertex set of a hypercube to
minimize the number of difference vectors along edges.

Not having Theorem~\ref{thm:canonical_intro}, Bunde, Milans, West, and
Wu~\cite{BMWW2} determined $\spec(K_n)$ directly.

\begin{theorem}[\cite{BMWW2}]\label{thm:Kn}
$\spec(K_n)=2^{\CL{\lg n}}-1$.  In addition,
every optimal spec of $K_n$ is obtained by deleting $2^{\CL{\lg n}}-n$ vertices
from the canonical edge-coloring of $K_{2^{\CL{\lg n}}}$ obtained by labeling
the vertices with the binary $\CL{\lg n}$-tuples.
\end{theorem}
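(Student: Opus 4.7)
The plan is to reduce the problem to an additive combinatorics question via Theorem~\ref{main}. By Observation~\ref{obs:canon}, any canonical edge-coloring of $K_n$ is a spec; taking an injection $f\st V(K_n)\to\FF_2^k$ with $k=\CL{\lg n}$ yields a canonical spec whose color set $(f(V)+f(V))\setminus\{0\}$ has at most $2^k-1$ colors, giving $\spec(K_n)\le 2^k-1$.

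For the matching lower bound, Theorem~\ref{main} shows that if $\phi$ is an optimal spec of $K_n$ on $c=\spec(K_n)$ colors and $\pi\circ\phi$ is canonical, then $\pi\circ\phi$ is itself a spec using at most $c$ colors; since every spec uses at least $\spec(K_n)$ colors, $\pi$ must be injective, so $\phi$ agrees (up to renaming colors) with a canonical edge-coloring. Hence $\spec(K_n)$ equals the minimum of $|S+S|-1$ over $n$-subsets $S$ of any binary vector space, and it remains to prove $|S+S|\ge 2^k$ whenever $|S|=n$. I would do this via Kneser's theorem: let $H=\mathrm{Stab}(S+S)$ with $|H|=2^h$, so $|S+S|\ge 2|S+H|-|H|$ and $|S+S|$ is a multiple of $|H|$. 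If $h\ge k$, then $|S+S|$ is a positive multiple of $2^h\ge 2^k$. If $h<k$, then $|S+H|$ is a multiple of $|H|$ with $|S+H|\ge|S|>2^{k-1}$, so $|S+H|\ge 2^{k-1}+2^h$, giving $|S+S|\ge 2^k+2^h>2^k$.

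To characterize the extremal colorings, observe that the case analysis forces $h=k$ and $|S+S|=|H|$ whenever $|S+S|=2^k$, so $S+S$ is a subgroup of order $2^k$. For any $s_0\in S$, $S+s_0\subseteq S+S=H$, so replacing $f$ by $f+s_0$ (which does not change $f(u)+f(v)$ in $\FF_2$) places the image of $f$ inside $H$; any linear isomorphism $H\cong\FF_2^k$ then identifies the labeling with an injection $V(K_n)\to\{0,1\}^k$. Thus every optimal spec of $K_n$ is obtained by deleting $2^k-n$ vertices from the canonical edge-coloring of $K_{2^k}$ whose vertex labels are all of $\{0,1\}^k$, matching the statement of the theorem.

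The main obstacle is the additive inequality $|S+S|\ge 2^{\CL{\lg|S|}}$ and its equality case; everything else is bookkeeping from Theorem~\ref{main} and Observation~\ref{obs:canon}.
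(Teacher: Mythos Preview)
Your argument is correct. The paper itself does not give a self-contained proof of Theorem~\ref{thm:Kn}; it cites the result from~\cite{BMWW2} and then remarks that the numerical value $\spec(K_n)=2^{\CL{\lg n}}-1$ can be recovered from its new machinery by a detour through $K_{n,n}$: Theorem~\ref{main} combined with Yuzvinsky's Theorem~\ref{thm:circ} yields $\spec(K_{n,n})=n\circ n=2^{\CL{\lg n}}$, and then Proposition~\ref{bip2cliq} gives $\spec(K_n)\ge\spec(K_{n,n})-1$. Your route is more direct: you apply Theorem~\ref{main} to $K_n$ itself, reducing to the sumset problem of minimizing $|S+S|-1$ over $n$-sets $S$ in a binary vector space, and then settle that with Kneser's theorem. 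This buys you the characterization of optimal specs essentially for free from the equality analysis in Kneser, which the paper does not re-derive at all (it is only quoted from~\cite{BMWW2}). Conversely, the paper's detour has the advantage of yielding the more general $\spec(K_{s,t})=s\circ t$ along the way, whereas your Kneser argument, as written, is tailored to the symmetric case $A=B=S$. One cosmetic point: when you say ``$\pi$ must be injective'' you mean injective on $\im(\phi)$, which is all that is needed.
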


A result similar to a piece of the argument appeared independently a few years
earlier.  Keevash and Sudakov~\cite{KS} proved that if $K_n$ with $n\ge4$ has
an edge-coloring in which the edges of any copy of $K_4$ receive three colors
or six colors, then $n$ is a power of $2$ (presented also in~\cite{Boll}).
The argument in~\cite{BMWW2} shows that if an optimal spec of a complete
graph fails a similar local condition, then another vertex can be added without
needing another color, up to the next power of $2$.
%In both arguments the
%flavor of canonical edge-colorings is lurking but is not explicit.

Let $K_{s,t}$ denote the complete bipartite graph with parts of sizes $s$ and
$t$.  We will use Theorem~\ref{main} to determine $\spec(K_{s,t})$, whose
value was conjectured in~\cite{BMWW2}.  It is the value $s\circ t$ known as
the {\it Hopf--Stiefel function} defined for positive integers $s$ and $t$ by
Hopf~\cite{Hopf} and Stiefel~\cite{Stie} to be the least integer $n$ such that
$(x+y)^n$ is contained in the ideal over $\mathbb F_2[x,y]$ generated by $x^s$
and $y^t$.  Equivalently, $s \circ t$ is the least integer $n$ such that
$\binom{n}{k}$ is even whenever $n-t < k < s$.  Plagne~\cite{Pla} obtained the
following formula for $s \circ t$, later reproved by K\'arolyi~\cite{Kar} with
a short inductive proof (see also~\cite{EK2}):

\begin{theorem}[\cite{Pla,Kar}]
For positive integers $s$ and $t$, 
\[
s \circ t = \min_{j \in \mathbb N} \left \{ 2^j
\left( \CL{\frac{s}{2^j}}+ \CL{\frac{t}{2^j}}- 1 \right ) \right \}
\]
\end{theorem}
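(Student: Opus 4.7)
The proof splits naturally into an upper bound and a lower bound, with the upper bound yielding immediately to Lucas's theorem and the lower bound requiring Károlyi's inductive machinery.

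For the upper bound, I would fix $j \in \mathbb{N}$ and set $a = \CL{s/2^j}$, $b = \CL{t/2^j}$, and $n = 2^j(a+b-1)$. The plan is to show that $\binom{n}{k}$ is even whenever $n-t < k < s$; by the binomial characterization of $s \circ t$ this yields $s \circ t \le n$, and minimizing over $j$ gives the inequality. The hypotheses $s \le 2^j a$ and $n - t \ge 2^j(a+b-1) - 2^j b = 2^j(a-1)$ confine such a $k$ to the interval $(2^j(a-1), 2^j a)$, so $k = 2^j(a-1) + r$ with $0 < r < 2^j$. The lowest $j$ binary digits of $k$ encode $r$ (nonzero), while those of $n$ vanish since $2^j \mid n$. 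Lucas's theorem then forces $\binom{n}{k}$ to be even.

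For the lower bound, I would induct on $s + t$, following Károlyi's strategy. The base case is $s = 1$ or $t = 1$, where $s \circ t = s + t - 1$ matches the $j = 0$ term of the formula, which one checks is also the minimum over $j$. For the inductive step, split on parities of $s$ and $t$. In the even--even case, write $s = 2s'$, $t = 2t'$ and use the identity $(2s') \circ (2t') = 2(s' \circ t')$, which is immediate from $(x+y)^{2n} = ((x+y)^n)^2$ in $\mathbb{F}_2[x,y]$, together with the parallel identity for the right-hand side (the $j=0$ term is odd while every $j \ge 1$ term rescales cleanly). When at least one of $s,t$ is odd, I would show directly that any $n$ strictly less than the claimed minimum admits some $k \in (n - t, s)$ with $\binom{n}{k}$ odd, by constructing $k$ as a binary submask of $n$ placed greedily in the forbidden window.

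The main obstacle will be the mixed-parity step of the lower bound. The even--even recursion is transparent, but when exactly one of $s,t$ is odd the minimizer $j$ of the formula may sit at an interior value, and building a submask $k$ of $n$ that simultaneously satisfies $k < s$ and $n-k < t$ requires a careful digit-by-digit analysis. It may help to further induct on the number of $1$-bits of $n$, peeling off the top bit and reducing to a smaller instance whose ceiling arithmetic $\CL{s/2^j}$ interacts cleanly with the submask construction. This is where Károlyi's proof earns its cleverness, and where my plan would need to be filled in with the most care.
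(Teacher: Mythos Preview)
The paper does not prove this theorem. It is quoted as a known result from the literature, attributed to Plagne~\cite{Pla} and K\'arolyi~\cite{Kar}, and is used only as background to interpret the value $s\circ t$ that arises in Theorem~\ref{thm:Kst}. There is therefore no ``paper's own proof'' to compare against.

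That said, your sketch is a faithful outline of the standard argument. The upper bound is complete and correct: for fixed $j$, with $a=\CL{s/2^j}$, $b=\CL{t/2^j}$, and $n=2^j(a+b-1)$, any $k$ with $n-t<k<s$ lies strictly between $2^j(a-1)$ and $2^j a$, so its low $j$ bits are nonzero while those of $n$ vanish, and Lucas forces $\binom{n}{k}$ even. For the lower bound you correctly identify the even--even recursion $(2s')\circ(2t')=2(s'\circ t')$ via Frobenius, and you are honest that the mixed-parity case is where the real work sits. As written, that case is a plan rather than a proof: ``constructing $k$ as a binary submask of $n$ placed greedily in the forbidden window'' is the right shape, but you have not yet said which bit to peel off or verified that the reduced instance has strictly smaller $s+t$ while keeping the ceiling arithmetic aligned with the minimizing $j$. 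If you want to complete it, K\'arolyi's move is cleaner than a direct submask construction: he reduces the odd cases by relating $s\circ t$ to $(s-1)\circ t$ or $s\circ(t-1)$ and checking the one extra binomial coefficient this costs, rather than building $k$ from scratch.
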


\noindent
Note in particular that $n\circ n=2^{\CL{\lg n}}$, since the minimum then occurs
by setting $j=\CL{\lg n}$.

We prove the conjecture of~\cite{BMWW2} that $\spec(K_{s,t})=s\circ t$
using Theorem~\ref{main} and a combinatorial interpretation of $s\circ t$ due
to Yuzvinsky~\cite{Yuz}.  Let $\UU$ denote the vector space over $\FF_2$
consisting of binary sequences having a finite number of $1$s.
Yuzvinsky~\cite{Yuz} interpreted $s\circ t$ as the solution to an extremal
problem.

\begin{theorem}[\cite{Yuz}]\label{thm:circ}
For nonempty subsets $A$ and $B$ of $\UU$, let $A+B=\{a+b\st a\in A, b\in B\}$.
If $|A| = s$ and $|B| = t$, then $|A+B| \geq s \circ t$, and for each pair
$(s,t)$ there exist subsets $A$ and $B$ of $\UU$ achieving equality.
\end{theorem}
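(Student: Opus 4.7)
The plan is to establish matching lower and upper bounds on $\min|A+B|$ over nonempty subsets $A,B\subseteq\UU$ with $|A|=s$, $|B|=t$.

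For the lower bound I will apply Kneser's theorem inside the abelian group $\UU$. Let $H=\{u\in\UU\colon u+(A+B)=A+B\}$ be the stabilizer of $A+B$. Because every element of $\UU$ has order two, $H$ is a subgroup isomorphic to $\FF_2^j$ for some $j\ge 0$. Kneser's theorem gives $|A+B|\ge |A+H|+|B+H|-|H|$, and since $A+H$ and $B+H$ are unions of cosets of $H$ we have $|A+H|\ge 2^j\CL{s/2^j}$ and $|B+H|\ge 2^j\CL{t/2^j}$. Therefore
\[
|A+B|\ge 2^j\bigl(\CL{s/2^j}+\CL{t/2^j}-1\bigr)\ge s\circ t
\]
by the Plagne--K\'arolyi formula.

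For the matching construction, I will take $A$ and $B$ to be the binary representations in $\UU$ of $\{0,1,\ldots,s-1\}$ and $\{0,1,\ldots,t-1\}$, so that addition in $\UU$ is bitwise XOR. The claim is that $|A+B|=s\circ t$, proved by induction on $\max(s,t)$. Assuming $s\le t$, set $m=2^{\FL{\lg(t-1)}}$ and split $B=[0,m)\cup(m+[0,t-m))$ according to the bit of weight $m$. When $s\le m$ the sumset decomposes as the disjoint union $(A+[0,m))\sqcup(m+A+[0,t-m))$, of size $m+s\circ(t-m)$ by induction. When $s>m$, splitting $A$ the same way and using that $[0,m)$ is the subgroup $\FF_2^{\lg m}\le\UU$ collapses the four pieces to $A+B=[0,2m)$ of size $2m$. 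A short check that Plagne's formula satisfies the matching recursion ($s\circ t=m+s\circ(t-m)$ in the first case and $s\circ t=2m$ in the second) closes the induction.

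The main obstacle is the combinatorial verification that Plagne's formula obeys this recursion under the $m$-split, which amounts to comparing the exponent $j=\lg m$ against the other exponents in the definition of $s\circ t$. A cleaner algebraic alternative for the upper bound is a coset construction: let $j^*$ minimize Plagne's formula and set $a=\CL{s/2^{j^*}}$, $b=\CL{t/2^{j^*}}$. A monotonicity argument (substituting $j^*+j'$ into the expression for $s\circ t$ shows any $j'\ge 1$ violating $a\circ b=a+b-1$ would beat $j^*$) forces $a\circ b=a+b-1$, so by induction one obtains $T_A,T_B\subseteq\UU/H$ with $|T_A|=a$, $|T_B|=b$, and $|T_A+T_B|=a+b-1$, where $H\le\UU$ has dimension $j^*$. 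Taking $A$ to be $a-1$ full cosets of $H$ plus a partial coset $A^\circ+c^*$ of size $s-(a-1)2^{j^*}$ inside the remaining coset, and similarly $B$, coset-by-coset accounting together with $|A^\circ|+|B^\circ|>|H|$ (which follows from $s\circ t\le s+t-1$) forces the final coset to be full and yields $|A+B|=s\circ t$.
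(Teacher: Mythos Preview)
The paper does not prove this theorem; it is quoted from Yuzvinsky~\cite{Yuz} and used as a black box in deriving Theorem~\ref{thm:Kst}. Your proposal supplies a correct proof, so there is nothing to compare against.

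A few remarks on the details. For the lower bound, Kneser's theorem applies because $A$ and $B$ are finite: one may work inside a finite-dimensional subspace of $\UU$, where the stabilizer $H$ is a finite subgroup isomorphic to $\FF_2^j$, and then $|A+H|\ge 2^j\CL{s/2^j}$ since $A+H$ is a union of $H$-cosets. This is the standard route (cf.~\cite{EK2,Kar}).

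For the upper bound, your XOR construction on $\{0,\dots,s-1\}$ and $\{0,\dots,t-1\}$ is correct, and the Plagne-formula recursion you invoke does hold. When $s\le m<t\le 2m$ with $m$ a power of $2$: for $j\le\lg m$ one has $\CL{t/2^j}=m/2^j+\CL{(t-m)/2^j}$, so the $j$th term for $(s,t)$ equals $m$ plus the $j$th term for $(s,t-m)$, while for $j>\lg m$ the term is at least $2m\ge m+s\circ(t-m)$. When $m<s\le t\le 2m$: the value $2m$ is attained at $j=1+\lg m$, and for $j\le\lg m$ both ceilings exceed $m/2^j$, giving at least $2m+2^j$. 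In your coset alternative, the assertion that the ``final coset is full'' is justified by pigeonhole: in any finite abelian group $H$, if $|A^\circ|+|B^\circ|>|H|$ then for every $h\in H$ the sets $A^\circ$ and $h-B^\circ$ intersect, whence $A^\circ+B^\circ=H$.
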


By choosing a basis, any finite-dimensional binary vector space embeds
isomorphically in $\UU$.  Also, for any binary labeling $f$ of $K_{s,t}$ that
assigns sets $A$ and $B$ to the parts, the set of colors in the associated
canonical edge-coloring is precisely the sum set $A+B$.  To ensure that $f$ is
injective, we can add a new coordinate that is $0$ for each member of $A$ and
$1$ for each member of $B$.  The vectors assigned to the edges as colors then
all have $1$ in the new coordinate, but the multiplicities of colors along any
open walk do not change.

Therefore, Theorem~\ref{thm:circ} implies that the minimum number of colors in
a canonical edge-coloring of $K_{s,t}$ is $s\circ t$.  The conjecture then
follows immediately from Theorem~\ref{main}:

\begin{theorem}\label{conj:Kst}\label{thm:Kst}
If $s,t\in\NN$, then $\spec(K_{s,t}) = s \circ t$.
In particular, $\spec(K_{n,n})=2^{\CL{\lg n}}$.
\end{theorem}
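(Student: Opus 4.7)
The plan is to deduce Theorem~\ref{thm:Kst} by combining the two ingredients already in place in the text: Theorem~\ref{main}, which converts the problem of computing $\spec(K_{s,t})$ into the problem of finding a canonical edge-coloring of $K_{s,t}$ with as few colors as possible, and Yuzvinsky's Theorem~\ref{thm:circ}, which measures the minimum size of a binary sumset $A+B$ with prescribed $|A|$ and $|B|$.

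For the lower bound, I would start from an arbitrary canonical edge-coloring $\phi$ of $K_{s,t}$, induced by an injective binary labeling $f\st V(K_{s,t})\to \VV$. Let $(X,Y)$ be the bipartition of $K_{s,t}$, and set $A=f(X)$ and $B=f(Y)$, so that $|A|=s$ and $|B|=t$. Because every edge of $K_{s,t}$ joins $X$ to $Y$, the color set used by $\phi$ is exactly $A+B$. Choosing a basis of $\VV$ identifies $\VV$ with a subspace of $\UU$, so Theorem~\ref{thm:circ} applies and gives $|A+B|\ge s\circ t$. Theorem~\ref{main} then yields $\spec(K_{s,t})\ge s\circ t$.

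For the matching upper bound, I would invoke the existence half of Theorem~\ref{thm:circ} to obtain subsets $A,B\subseteq \UU$ with $|A|=s$, $|B|=t$, and $|A+B|=s\circ t$, and then build a canonical edge-coloring from them. The only subtlety is that $A$ and $B$ may intersect, so assigning $A$ to $X$ and $B$ to $Y$ need not yield an injective labeling. I would remedy this exactly as foreshadowed in the paragraph before the theorem: append one extra coordinate to every vector, taking value $0$ on $X$-labels and $1$ on $Y$-labels. The resulting labeling $f$ is now injective, and on every edge the added coordinate contributes a $1$ to the difference, so the color set is in bijection with $A+B$ and still has size $s\circ t$. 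By Observation~\ref{obs:canon}, this canonical edge-coloring is a spec, giving $\spec(K_{s,t})\le s\circ t$.

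There is no real obstacle beyond this injectivity fix; the theorem is essentially an assembly of the earlier results. The specialization $\spec(K_{n,n})=2^{\CL{\lg n}}$ then follows immediately from the text's remark that $n\circ n=2^{\CL{\lg n}}$.
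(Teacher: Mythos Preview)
Your proposal is correct and follows exactly the paper's own argument: reduce via Theorem~\ref{main} to counting colors in a canonical edge-coloring, identify that color set with a sumset $A+B$, and apply both directions of Yuzvinsky's Theorem~\ref{thm:circ}, handling injectivity by appending a coordinate. There is nothing to add; this is precisely how the paper proves it.
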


A few special cases of Theorem~\ref{thm:Kst} were known previously.  The case
$s=t=2^k$ was proved in~\cite{BMWW2}, and Hu and Chang~\cite{HC} proved it when
$s\ge3$ and one of $\{t,t+1,t+2\}$ is a multiple of $2^{\CL{\lg s}}$.
Also, since $\spec(K_{n,n})=2^{\CL{\lg n}}$ in the special case $s=t=n$,
Theorem~\ref{thm:Kst} implies the main result $\spec(K_n)=2^{\CL{\lg n}}-1$ 
of~\cite{BMWW2}, due to the following:

\begin{proposition}[\cite{BMWW}]\label{bip2cliq}
$\spec(K_n)\ge\spec(K_{n,n})-1$.
\end{proposition}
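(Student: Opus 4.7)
The plan is to establish the equivalent inequality $\spec(K_{n,n})\le \spec(K_n)+1$. By Theorem~\ref{main}, this reduces to constructing a canonical edge-coloring of $K_{n,n}$ that uses at most $\spec(K_n)+1$ colors, starting from an optimal canonical edge-coloring of $K_n$.

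First I would invoke Theorem~\ref{main} to fix an injective binary labeling $f\st V(K_n)\to \VV$ whose canonical edge-coloring uses exactly $\spec(K_n)$ colors. Writing the two parts of $K_{n,n}$ as $\{a_v\st v\in V(K_n)\}$ and $\{b_v\st v\in V(K_n)\}$, I would then define a labeling $g\st V(K_{n,n})\to \VV\oplus \FF_2$ by $g(a_v)=(f(v),0)$ and $g(b_v)=(f(v),1)$. The appended coordinate distinguishes the two parts and injectivity of $f$ distinguishes vertices within each part, so $g$ is injective.

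The canonical edge-coloring associated with $g$ assigns the edge $a_u b_v$ the color $(f(u)+f(v),1)$. For $u\ne v$, these are exactly the $\spec(K_n)$ colors of the canonical coloring of $K_n$, each paired with the bit $1$; for $u=v$, the color is the single extra vector $(0,1)$, which is distinct from the rest because injectivity of $f$ gives $f(u)+f(v)\ne 0$ whenever $u\ne v$. Consequently the total number of colors is $\spec(K_n)+1$, and a final appeal to Theorem~\ref{main} converts this count into the desired bound on $\spec(K_{n,n})$.

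Given Theorem~\ref{main}, there is no real obstacle: the argument collapses to appending one coordinate that flags each side of $K_{n,n}$, and the only point to double-check is that this extra bit does not accidentally identify distinct colors, which would fail only if $f(u)=f(v)$ for some $u\ne v$---ruled out by injectivity. A direct combinatorial proof that avoided Theorem~\ref{main} would instead verify the spec property for the natural coloring assigning $\phi(uv)$ to $a_u b_v$ (for $u\ne v$) and a new color to each $a_v b_v$, which would require tracking arbitrary open walks in $K_{n,n}$ and their images in $K_n$ after suppressing the new-color edges---a substantially more tedious verification.
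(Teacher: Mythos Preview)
Your proof is correct, but it takes a different route from the paper. The paper gives exactly the direct combinatorial argument you sketch in your final paragraph: starting from \emph{any} optimal spec $\phi$ of $K_n$ (not necessarily canonical), it defines the same coloring $\phi'$ of $K_{n,n}$ that you do---assign $\phi(v_iv_j)$ to both $x_iy_j$ and $x_jy_i$, and one new color to every $x_iy_i$---and then verifies directly that $\phi'$ is a spec. The verification is short rather than ``substantially more tedious'': a parity walk under $\phi'$ has even length, hence endpoints on the same side of the bipartition; projecting $x_i,y_i\mapsto v_i$ (and deleting the resulting repeated vertices coming from the new-color edges, which occur an even number of times) produces an open parity walk in $K_n$, contradicting that $\phi$ is a spec. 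Your approach is tidier once Theorem~\ref{main} is available, since the spec property comes for free from Observation~\ref{obs:canon}; but it introduces a dependency on the paper's main theorem, whereas the paper's argument is self-contained (indeed it is the original proof from~\cite{BMWW}, predating Theorem~\ref{main}) and works from an arbitrary spec of $K_n$. Minor note: your ``final appeal to Theorem~\ref{main}'' is more than you need---Observation~\ref{obs:canon} alone gives that a canonical edge-coloring is a spec, which is all that step requires.
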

\begin{proof}
From a spec $\phi$ of $K_n$ with vertices $\VEC v1n$, define an edge-coloring
$\phi'$ of $K_{n,n}$ with parts $\VEC x1n$ and $\VEC y1n$ by letting
$\phi'(x_iy_j)=\phi'(x_jy_i)=\phi(v_iv_j)$ when $i\ne j$ and using one new
color on all the edges of the form $x_iy_i$.  Because any walk of even length
in $K_{n,n}$ has both endpoints on the same side of the bipartition, any open
parity walk under $\phi'$ lifts to an open parity walk under $\phi$.
\end{proof}

Our characterization of specs allows us to derive new lower bounds
for $\spec(G)$ based on the algebraic aspects of canonical edge-colorings.
Without describing the technical general lower bounds here, we mention
several applications.

Using Proposition~\ref{prop:subQk} and an inductively constructed spec, for the
$n$-vertex path $P_n$ it is easy to show $p(P_n)=\spec(P_n)=\CL{\lg n}$
(\cite{BMWW}).  More generally, we compute $\spec$ asymptotically for the 
{\it path-power} $P_n^\ell$ with vertices $\VEC v0{n-1}$ in which $v_i$ and
$v_j$ are adjacent if and only if $|i-j|\le \ell$.  The dependence on $n$ in
the upper and lower bounds is the same.

\begin{theorem}\label{pathpow}
For integers $n$ and $\ell$ with $1\le\ell\le\CL{\lg n}$,
$$
\ell\CL{\lg n}-\binom{\ell+1}2<\spec(P_n^\ell)
<\ell\CL{\lg n}-\ell(\FL{\lg \ell}-1).
$$
\end{theorem}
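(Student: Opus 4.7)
By Theorem~\ref{main}, $\spec(P_n^\ell)$ equals the minimum, over injective binary labelings $f\colon V(P_n^\ell)\to\VV$, of the number of distinct edge-differences $f(v_i)+f(v_j)$. Both bounds reduce to analyzing such labelings; write $k=\CL{\lg n}$.

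\emph{Upper bound.} I would take the natural labeling $f(v_i)=\sigma(i)\in\FF_2^k$, the $k$-bit binary expansion of $i$. The color set is $B=\{\sigma(i)\oplus\sigma(i+t):1\le t\le\ell,\;0\le i\le n-1-t\}$. The key structural fact is that for each $t$ with $P=\FL{\lg t}$, the XOR $\sigma(i)\oplus\sigma(i+t)$ decomposes as a low part in bits $0,\ldots,P$ (depending only on $t$ and $i\bmod 2^{P+1}$) plus a high part in bits above $P$, where the high part is either zero or a consecutive run of $1$s starting at position $P+1$ coming from carry propagation through a block of $1$s in $i$. Stratifying $B$ by $P\in\{0,1,\ldots,\FL{\lg\ell}\}$, counting the distinct (low part, carry-run length) combinations, and carefully handling overlaps across $P$-strata (when the same $c\in B$ admits several valid choices of $P$), I would verify $|B|<\ell k-\ell(\FL{\lg\ell}-1)$.

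\emph{Lower bound.} Given any injective $f$ realizing a spec, set $d_s=f(v_s)+f(v_{s+1})$ for $0\le s\le n-2$, so every color is a length-$t$ consecutive sum $d_i+d_{i+1}+\cdots+d_{i+t-1}$ for some $1\le t\le\ell$. Let $D_t$ denote the set of such length-$t$ sums. The subgraph $G_t\subseteq P_n^\ell$ of distance-$t$ edges is a disjoint union of $t$ paths each on roughly $n/t$ vertices, so its spec is $\CL{\lg\CL{n/t}}\ge k-\FL{\lg t}$, and since the restriction of $f$ to $G_t$ is a spec of $G_t$, Proposition~\ref{prop:subQk} gives $|D_t|\ge k-\FL{\lg t}$. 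Summing, $\sum_{t=1}^\ell|D_t|\ge\ell k-\sum_{t=1}^\ell\FL{\lg t}>\ell k-\binom{\ell+1}{2}$, since $\sum_{t=1}^\ell\FL{\lg t}<\binom{\ell+1}{2}$. The remaining task is to bound the pairwise overlaps $|D_t\cap D_{t'}|$: each $c\in D_t\cap D_{t'}$ with $t<t'$ yields an algebraic relation $d_i+\cdots+d_{i+t-1}=d_j+\cdots+d_{j+t'-1}$ among the $d_s$'s, equivalently four distinct vertices $v_a,v_{a+t},v_b,v_{b+t'}$ with $f(v_a)+f(v_{a+t})=f(v_b)+f(v_{b+t'})$. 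Counting such coincidences via a local clique argument (for instance by restricting to each $K_{\ell+1}$ spanned by $\ell+1$ consecutive vertices, where overlap is controlled by $\spec(K_{\ell+1})$), I would bound the total pairwise overlap by $\binom{\ell+1}{2}-\sum_{t=1}^\ell\FL{\lg t}$, and Bonferroni's inequality then gives $|\bigcup_t D_t|>\ell k-\binom{\ell+1}{2}$.

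The main obstacle is the overlap bound in the lower-bound argument; the upper-bound count, while delicate, is essentially structural bookkeeping around the carry-propagation description of $\sigma(i)\oplus\sigma(i+t)$.
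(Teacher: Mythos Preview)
Your upper-bound labeling is the paper's construction in disguise: the Gray-code map $g(i)=\sigma(i)\oplus(\sigma(i)\gg 1)$ is an invertible $\FF_2$-linear transformation, so the difference multisets $\{\sigma(i)\oplus\sigma(j)\}$ and $\{g(i)\oplus g(j)\}$ have exactly the same cardinality.  The paper works in Gray-code coordinates, where the count is organized by the \emph{largest} nonzero coordinate of each color rather than by carry behavior; via a sequence of lemmas about windows in the change sequence $\langle c\rangle$ it shows that for each $k\le\CL{\lg n}$ there are precisely $\min\{\ell,2^{k-1}\}$ colors whose top bit is $k$, and summing gives the bound.  Your ``overlaps across $P$-strata'' are the binary-coordinate shadow of the same bookkeeping, and although your sketch is plausible you have not actually carried it out; the paper's Lemmas~\ref{lem:trim} and~\ref{lem:differences} are where the real work sits.

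Your lower-bound route is genuinely different from the paper's, and the gap you flag is real.  The per-distance estimate $|D_t|\ge k-\FL{\lg t}$ is fine (up to harmless edge effects), but the needed overlap inequality $\sum_{t<t'}|D_t\cap D_{t'}|\le\binom{\ell+1}{2}-\sum_t\FL{\lg t}$ is unproven, and your ``local clique argument'' does not obviously control global coincidences $f(v_a)+f(v_{a+t})=f(v_b)+f(v_{b+t'})$: nothing prevents many such relations from being concentrated on a few colors, or from occurring between vertices that never sit in a common $(\ell{+}1)$-clique.  The paper sidesteps this entirely via Corollary~\ref{cor:density} (from the saturating-set Lemma~\ref{lem:density}): under the natural vertex order the back-degrees are $0,1,\ldots,\ell-1,\ell,\ell,\ldots$, so any saturating set contributes at least the sum of the $\CL{\lg n}$ smallest back-degrees among $v_2,\ldots,v_n$, which is $\ell\CL{\lg n}-\binom{\ell}{2}>\ell\CL{\lg n}-\binom{\ell+1}{2}$.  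This one-line argument already delivers the strict lower bound with no inclusion--exclusion at all; you should replace your overlap analysis with it.
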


\noindent
When $\ell=1$ the two bounds imply the exact value $\CL{\lg n}$.
The lower bound is based on further analysis of canonical edge-colorings.
The upper bound uses a spec constructed from the well-known binary reflected
Gray code (see~\cite{Gil,RND}, for example).  In fact, this is the canonical
edge-coloring generated by the same labeling as the natural optimal spec
for $P_n$.

The upper bound for $\spec(P_n^\ell)$ is used in our final result about the
relationship between $p(G)$ and $\spec(G)$.  For the $n$-vertex cycle $C_n$,
\cite{BMWW} showed $p(C_n) = \spec(C_n) = \lceil \lg n \rceil$ for even $n$ and
$p(C_n) = \spec(C_n) - 1 = \lceil \lg n \rceil $ for odd $n$.
The fact $\spec(C_n)>p(C_n)$ for odd $n$ suggested asking how large the
difference can be.  Also, all examples with $\spec(G)>p(G)$ found by Bunde,
Milans, West, and Wu were nonbipartite.  This motivated a conjecture and a
question.
\begin{conjecture}[\cite{BMWW}]
\label{conj:bip}
    $p(G) = \spec(G)$ for every bipartite graph $G$.
\end{conjecture}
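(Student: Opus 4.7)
Since Conjecture~\ref{conj:bip} is in fact refuted in this paper (as announced in the abstract), my plan is to propose a disproof by constructing, for each large $k$, a bipartite graph $G_k$ with
\[
p(G_k)\le 2k+k^{1/3}\qquad\text{and}\qquad \spec(G_k)\ge \frac{1-o(1)}{3}\,k\ln k,
\]
so that $\spec(G_k)/p(G_k)\to\infty$ and in particular $p(G_k)\ne\spec(G_k)$.

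For the lower bound on $\spec(G_k)$, I would lean on Theorem~\ref{main}: any spec of $G_k$ is equivalent to a canonical edge-coloring arising from an injective labeling $f\st V(G_k)\to\FF_2^d$, and the number of colors used is the number of distinct vectors $f(u)+f(v)$ along edges $uv$. Writing $A=f(V_1)$ and $B=f(V_2)$ for the labels on the two bipartition classes, every edge color lies in the sumset $A+B$, and I would pick $G_k$ so that at least $\frac{1-o(1)}{3}k\ln k$ of these sums are \emph{forced} to occur: a coupon-collector-style argument should show that each of $\Omega(k\ln k)$ coordinates of $\FF_2^d$ must appear in some $f(u)+f(v)$ along an edge, because the incidence structure of $G_k$ contains enough walks to push every coordinate into some edge sum. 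For the pec upper bound, I would exploit that parity paths in a bipartite graph are much more restricted than open parity walks: starting from the Gray-code pec of $P_n$ and Theorem~\ref{pathpow}, I would assemble $G_k$ as a union of $k$ structured pieces that can share a common palette of $2k$ colors, adding at most $k^{1/3}$ extra ``repair'' colors (e.g., one per block in a $k^{1/3}$-part partition of $V(G_k)$) to break the residual parity paths bridging the pieces.

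The main obstacle is finding a single graph $G_k$ that balances these two bounds: it must be algebraically rigid enough that every injective binary labeling realizes $\Omega(k\log k)$ distinct sums along edges, yet combinatorially structured enough to admit a pec using only $O(k)$ colors. I would iterate between candidate families --- bipartite double covers of path-powers, bipartite Cayley graphs on $\FF_2^k$, or randomly constructed bipartite graphs with a prescribed degree pattern --- verifying the pec upper bound by explicit construction and the $\spec$ lower bound through a sumset/dimension argument on the vertex labels, closely modelled on the Yuzvinsky/Hopf--Stiefel machinery already used for $K_{s,t}$ earlier in the paper.
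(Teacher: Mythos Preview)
Your proposal correctly identifies the target statement and the overall shape of the disproof, but it is a plan rather than an argument, and the one concrete mechanism you offer for the lower bound is mistaken. You propose to show that ``each of $\Omega(k\ln k)$ coordinates of $\FF_2^d$ must appear in some $f(u)+f(v)$ along an edge''; but the number of coordinates touched by the edge-sums gives no lower bound on the number of \emph{distinct} edge-sums, which is what $\spec$ counts. (All edges could share a single color of large weight.) The sumset/Yuzvinsky machinery you invoke controls $|A+B|$ only when \emph{every} pair in $A\times B$ is an edge, as in $K_{s,t}$; for a sparse bipartite graph it gives nothing. So the lower-bound half of your plan currently has no working engine.

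The paper's actual argument supplies the two ideas you are missing. First, the construction: $G_k$ is obtained by taking, for each $\ell=1,\dots,\lfloor k^{1/3}\rfloor$, a bipartite path-power $\hat P(2^{\lceil k/\ell\rceil},\ell)$ and attaching all of them by cut-edges to a single common vertex. The pec upper bound then follows from the elementary cut-edge lemma $p(G)\le 1+\max\{p(G_1),p(G_2)\}$ together with the Gray-code bound on $\spec(P_n^{2\ell})$, giving $p(G_k)<2k+k^{1/3}$; your ``shared palette plus repair colors'' intuition is correct, but the mechanism is simply that a pec may reuse colors freely across a cut-edge, at the cost of one new color on that edge. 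Second, the lower bound on $\spec$ uses not a sumset argument but the back-degree lemma (Lemma~\ref{l:lowerBound}): for any canonical coloring, one finds a \emph{saturating set} $T$ with $\sum_{v\in T}d^-(v)\le|\phi|$. Ordering the vertices so that the $Z_\ell$ with large $\ell$ come first, the saturating condition forces $T$ to hit roughly $k/d$ vertices of back-degree at least $d$ for each $d\le k^{1/3}$, and the harmonic sum $\sum_{d=1}^{k^{1/3}} k/d$ produces the $\tfrac13 k\ln k$. Neither the cut-edge trick nor the saturating-set counting appears in your proposal, and without them the two bounds do not close.
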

\begin{question}[\cite{BMWW}]
\label{q:diff}
    What is the maximum of $\spec(G)$ when $p(G) = q$?
\end{question}

Using the algebraic methods from our proof of
Theorem~\ref{thm:canonical_intro}, we construct for each positive integer $k$
that is a power of $2$ a bipartite graph $G$ that satisfies $p(G)=k+1$ and
$\spec(G)>k+\Omega((\lg k)^2)$ (in fact, $\spec(G)=(2-o(1))k$, but we omit
those details).
This disproves Conjecture~\ref{conj:bip}.  We then give a more intricate
construction involving a bipartite version of path-powers to 
give a partial answer to Question~\ref{q:diff}:
\begin{theorem}
For each integer $k$ there is a bipartite graph $G_k$ such that
${p(G_k)\le 2k+o(k)}$ and $\spec(G_k)\ge \s{\frac13-o(1)} k\ln k$.
\end{theorem}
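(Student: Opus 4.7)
The plan is to construct $G_k$ as a bipartite analog of a path-power, exploiting the $P_n^\ell$ machinery behind Theorem~\ref{pathpow} but adapted to the bipartite setting. Fix parameters $\ell$ and $n$ to be tuned later, with $\ell$ roughly linear in $k$ and $n$ of subexponential size in $k$. Define $G_k$ on bipartition $(X,Y)$ with $X = \{x_0,\ldots,x_{n-1}\}$ and $Y = \{y_0,\ldots,y_{n-1}\}$, placing $x_iy_j \in E(G_k)$ exactly when $|i-j|$ lies in a prescribed set encoding the path-power adjacency (for concreteness, $0 \le |i-j| \le \ell$). Bipartiteness forces every open parity walk to have even length, which interacts cleanly with the canonical edge-coloring framework of Theorem~\ref{main}.

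For the upper bound $p(G_k) \le 2k + o(k)$, I would give an explicit parity edge-coloring directly, without routing through $\spec$. The idea is to color each edge $x_iy_j$ by a pair consisting of the signed displacement $j-i$ together with a position coordinate drawn from the binary reflected Gray code used in the upper bound of Theorem~\ref{pathpow}. Verifying the absence of a parity path reduces to checking that, along any path, at least one color coordinate acquires an odd total count; this is easier in the bipartite setting than for $P_n^\ell$ itself, since paths alternate between sides, which controls how displacements can repeat and cancel. Tuning the parameters so that the displacement contributes at most $2\ell$ colors and the positional coordinate at most $o(\ell)$ colors yields the desired total of $2k + o(k)$.

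For the lower bound $\spec(G_k) \ge (\tfrac{1}{3} - o(1))k\ln k$, invoke Theorem~\ref{main} to reduce to minimizing $|\{f(x_i)+f(y_j) : x_iy_j \in E(G_k)\}|$ over injective binary labelings $f : V(G_k) \to \VV$. Writing $A = f(X)$ and $B = f(Y)$, this is a restricted binary sumset problem: lower-bound $|\{a+b : (a,b) \in R\}|$ where $R$ encodes the bipartite path-power incidence. I would partition $E(G_k)$ into sliding windows of length $\ell$, apply the Hopf--Stiefel inequality (Theorem~\ref{thm:circ}) inside each window to force at least $\ell \circ \ell \ge \ell$ distinct sum vectors, and aggregate across the $\Theta(n/\ell)$ windows via an averaging argument that bounds the color-sharing between windows. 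The main obstacle is the lower bound itself: extracting the sharp constant $\tfrac{1}{3}$ requires both a tight restricted-sumset inequality that respects the path-power incidence and a delicate coordination of $\ell$ and $n$ so that the upper bound remains at $2k + o(k)$ while the lower bound grows as $(k\ln k)/3$.
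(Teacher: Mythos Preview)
Your construction has a structural gap: a single bipartite path-power cannot produce the separation you need. The paper's $G_k$ is not one such graph but a wedge of $r=\lfloor k^{1/3}\rfloor$ different bipartite path-powers $\hat P(2^{\lceil k/\ell\rceil},\ell)$ for $\ell=1,\dots,r$, all joined at a common vertex by cut-edges. The cut-edges are the entire mechanism: by Lemma~\ref{lem:cut-edge}, a pec can reuse colors freely across a cut-edge, so $p(G_k)\le \max_\ell p(Z_\ell)+r<2k+k^{1/3}$, whereas a spec cannot decouple the pieces (cycles in one piece constrain colors globally via Theorem~\ref{main}). The lower bound then comes from Lemma~\ref{l:lowerBound} applied to all the pieces at once, and the varying $\ell$ values are what manufacture a harmonic sum $\sum_{d=1}^{r} k/d\sim k\ln r=\tfrac13 k\ln k$. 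The constant $\tfrac13$ is precisely $\ln(k^{1/3})/\ln k$; nothing in a single-parameter construction accounts for it.

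Both of your bound sketches also fail on inspection. For the upper bound, coloring $x_iy_j$ by the pair $(j-i,\text{position})$ uses on the order of $\ell\cdot\lg n$ colors, not $2\ell+o(\ell)$: the two coordinates multiply, they do not add. Dropping the position coordinate does not help, since pure displacement coloring is not even a pec: the path $x_0,y_1,x_1,y_2,x_2$ receives colors $1,0,1,0$. For the lower bound, a window-by-window application of Theorem~\ref{thm:circ} gives no control on color-sharing between windows; the Gray-code canonical coloring of $P_n^{2\ell}$ already shows that windows reuse colors so heavily that the total is $\Theta(\ell\lg n)$ rather than $\Theta(n)$, and Hopf--Stiefel applied locally cannot detect this. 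You have correctly identified the lower bound as the crux, but the resolution is to change the construction (many pieces, cut-edges, varying $\ell$), not to sharpen a sumset inequality on a single piece.
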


Many questions about $p(G)$ remain open, such as whether it equals $\spec(G)$
when $G$ is $K_n$ or $K_{s,t}$.  We mention one question solely about
$\spec(G)$ that may be of interest.

\begin{question}
What is the maximum of $\spec(G)$ over the family of $n$-vertex planar graphs
with maximum degree $D$?
\end{question}

\section{An algebraic framework}
\label{sec:algebraic}
In this section, we establish an algebraic framework for specs of connected
graphs.  Bunde, Milans, West, and Wu \cite{BMWW2} used a similar framework to
discuss specs of complete graphs, but our discussion applies to all connected
graphs.

When discussing the properties of specs throughout this section, we will
maintain the context of a given connected graph $G$ with a root vertex
$r \in V(G)$, and with respect to this we will discuss various aspects of a
spec of $G$.  With this convention, we will not reintroduce $G$ or $r$ with
every statement or proof.
We write $\langle E(G) \rangle$ for the binary vector space with basis $E(G)$.
Each element of $\langle E(G) \rangle$ corresponds to an edge subset of $G$.

Most discussions of edge-coloring view the colors as natural numbers, but
normally we do not view $\NN$ as a binary vector space.  To relate
edge-colorings using positive integers to canonical edge-colorings, we encode
integer colors as special vectors in the binary vector space $\UU$.  Recall
that $\UU$ is the set of binary sequences with finitely many $1$s; $\UU$ is a
binary vector space under component-wise binary addition.  We write the
additive identity (the all-$0$ sequence) as $0$.

\begin{definition}
The \emph{weight} of a vector in $\UU$ is the number of coordinates that
have value $1$.  An \emph{atom} in $\UU$ is a vector of weight $1$.
An \emph{atomic edge-coloring} is an edge-coloring in which each edge is
assigned an atom in $\UU$ as its color. 
\end{definition}

Given an edge-coloring using positive integers, we can encode each integer
color $c$ as the atom in $\UU$ with $1$ in coordinate $c$, converting the
edge-coloring to an atomic edge-coloring.  Taking the binary sum of two vectors
in $\UU$ then corresponds to taking the symmetric difference of the
corresponding subsets of colors in $\NN$.  The encoding does not change color
classes or affect whether the coloring is a spec.  Thus we can discuss any
edge-coloring by positive integers in its equivalent atomic form.

Henceforth when studying an edge-coloring $\phi$ in this section, we will
assume that $\phi$ is given as an atomic edge-coloring.  Our main goal when
$\phi$ is also a spec is to obtain an associated canonical edge-coloring using
colors in a quotient space of $\UU$, without increasing the number of colors.

We extend an edge-coloring $\phi$ of $G$ to a linear mapping from $\ang{E(G)}$ to
$\UU$ using binary vector sums by setting $\phi(A)=\sum_{e\in A}\phi(e)$ for
each edge subset $A\esub E(G)$.  Furthermore, when $W$ is a walk in $G$ with
edges $(\VEC e1t)$ in order, we define $\phi(W)=\phi(e_1)+\cdots+\phi(e_t)$.
The contribution from $\phi(e)$ vanishes if $e$ appears with even multiplicity
along $W$.  Our algebraic setting is motivated by the following observation,
which encodes the definition that parity walks are those using each color an
even number of times.

\begin{observation}
A walk $W$ is a parity walk for an edge-coloring $\phi$ if and only if 
$\phi(W)=0$.
\end{observation}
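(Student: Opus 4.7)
The plan is to prove this by unpacking the definitions. Since $\phi$ is presented as an atomic edge-coloring, each $\phi(e)$ is an atom in $\UU$, i.e., a vector with a single $1$ in some coordinate corresponding to the color assigned to $e$. The quantity $\phi(W)=\phi(e_1)+\cdots+\phi(e_t)$ is then a vector in $\UU$ whose $c$-th coordinate records how many of the edges $e_1,\dots,e_t$ are colored $c$, reduced modulo $2$ (since addition in $\UU$ is coordinate-wise over $\FF_2$).

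The key step is to match this coordinate-wise interpretation with the definition of a parity walk. For each color $c$, let $n_c$ be the number of indices $i\in\{1,\dots,t\}$ with $\phi(e_i)$ equal to the atom at coordinate $c$. Then the $c$-th coordinate of $\phi(W)$ equals $n_c\bmod 2$. Hence $\phi(W)=0$ in $\UU$ if and only if $n_c\equiv 0\pmod 2$ for every color $c$, which is precisely the statement that every color appears an even number of times along $W$, i.e., that $W$ is a parity walk.

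There is essentially no obstacle here: the observation is a direct translation between the combinatorial parity condition (each color used evenly) and the algebraic condition ($\phi(W)$ vanishes in $\UU$). The only small point worth noting in the write-up is that the linear extension of $\phi$ to walks is well-defined because addition in $\UU$ is commutative and associative, so the order of the edges along the walk does not affect the sum, even though a walk comes with an ordering.
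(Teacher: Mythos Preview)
Your argument is correct and matches the paper's treatment: the paper states this observation without proof, remarking only that it ``encodes the definition that parity walks are those using each color an even number of times,'' which is exactly the translation you carry out.
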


In this language, the definition becomes that $\phi$ is a spec if and only if
the nullspace of $\phi$ is contained in the subspace of $\ang{E(G)}$ consisting
of $\{\phi(W)\st W \textrm{ is a closed walk in $G$} \}$.  This subspace
contains the images of other sets that are not walks, such as unions of
disjoint cycles, but such sets can be augmented to closed walks through $r$
that have the same image under the linear map $\phi$.  This leads to a useful
characterization of specs in terms of $\UU$.

\begin{definition}
Let $W\cat Z$ denote the concatenation of walks $W$ and $Z$ ($W$ followed by
$Z$), and let $\ov W$ denote the reverse of $W$.  Under an edge-coloring
$\phi$ of $G$, for each $v \in V(G)$ let
\[S_v=
\{\phi(W)\st W \textrm{ is a walk from $r$ to $v$}    \}.
\]
\end{definition}

\begin{lemma}
\label{lem:disjoint}
An edge-coloring $\phi$ of $G$ is a spec if and only if
$S_u \cap S_v = \nul$ for any distinct vertices $u,v \in V(G)$.
\end{lemma}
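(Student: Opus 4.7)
The plan is to prove both directions by contrapositive, exploiting the linearity of $\phi$ as a map on walks and the fact that over $\FF_2$ the reverse of a walk and the walk itself have the same $\phi$-image (each edge appears equally often in $W$ and $\ov W$).

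For the direction ``$\phi$ a spec $\Rightarrow$ the sets $S_v$ are pairwise disjoint,'' I would argue the contrapositive. Suppose there exist distinct vertices $u,v$ and walks $W_1$ from $r$ to $u$ and $W_2$ from $r$ to $v$ with $\phi(W_1)=\phi(W_2)$. Then the concatenation $\ov{W_1}\cat W_2$ is a walk from $u$ to $v$, hence open, and
\[
\phi(\ov{W_1}\cat W_2)=\phi(\ov{W_1})+\phi(W_2)=\phi(W_1)+\phi(W_2)=0
\]
in $\UU$. This is an open parity walk, contradicting that $\phi$ is a spec.

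For the reverse direction, I would again use the contrapositive: if $\phi$ is not a spec, then there is an open parity walk $W$ from some vertex $u$ to a distinct vertex $v$, so $\phi(W)=0$. By connectedness of $G$, choose any walk $P$ from $r$ to $u$; then $\phi(P)\in S_u$. The concatenation $P\cat W$ is a walk from $r$ to $v$, and
\[
\phi(P\cat W)=\phi(P)+\phi(W)=\phi(P),
\]
so $\phi(P)\in S_v$ as well. This places $\phi(P)$ in $S_u\cap S_v$, contradicting the assumed disjointness.

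There isn't really a main obstacle here; the lemma is essentially a direct translation of the spec condition into the language of the sets $S_v$. The only points requiring care are that $\phi$ is well-defined and linear on walks (which follows from the earlier linear extension of $\phi$ to $\ang{E(G)}$, since edges appearing an even number of times contribute $0$), and that reversal preserves the image. Both are built into the setup already established in the section.
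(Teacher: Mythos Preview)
Your proof is correct and essentially identical to the paper's: both directions are argued by contrapositive, using $\ov{W_1}\cat W_2$ to produce an open parity walk from a common element of $S_u$ and $S_v$, and conversely prepending an $r,u$-walk to an open parity walk to exhibit a shared element. The only cosmetic difference is that the paper takes $P$ to be an $r,u$-path rather than an arbitrary walk, which makes no difference here.
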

\begin{proof}
Given an edge-coloring $\phi$ of $G$,
we prove the contrapositive of each implication.

If $S_u\cap S_v\ne\nul$ for distinct vertices $u$ and $v$, then there exist
an $r,u$-walk $W$ and an $r,v$-walk $W'$ such that $\phi(W)=\phi(W')$.  Since
also $\phi(\ov W)=\phi(W)$, we have $\phi(\ov W\cat W')=\phi(W)+\phi(W')=0$.
That is, $\ov W\cat W'$ is an open parity walk from $u$ to $v$, so $\phi$ is
not a spec.

Conversely, if $\phi$ is not a spec, then $G$ has an open parity $u,v$-walk $W$
for some vertices $u$ and $v$.  Letting $P$ be an $r,u$-path, it follows that
$P\cat W$ is an $r,v$-walk.  We obtain $\phi(P)\in S_u$ and
$\phi(P\cat W)\in S_v$ with $\phi(P)=\phi(P\cat W)$, since $\phi(W)=0$.
\end{proof}

The transformation $\phi\st\ang{E(G)}\to\UU$ behaves nicely when $\phi$ is a
spec.

\begin{lemma}\label{cosets}
If $\phi$ is a spec, then $S_r$ is a subspace of $\UU$, and the sets $S_v$ for
$v\in V(G)$ are cosets of $S_r$ (all with size $|S_r|$).
\end{lemma}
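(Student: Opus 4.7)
The plan is to verify the two claims directly from the linearity of $\phi\st\ang{E(G)}\to\UU$ together with walk concatenation; notably, the spec hypothesis itself is not used here, but it is the standing assumption of the algebraic framework and underwrites Lemma~\ref{lem:disjoint}, which complements this coset statement by ensuring that distinct vertices give distinct cosets.

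First I would show that $S_r$ is a subspace. The empty walk at $r$ witnesses $0\in S_r$. For closure under addition, given closed walks $W_1$ and $W_2$ at $r$, the concatenation $W_1\cat W_2$ is again a closed walk at $r$, and by the definition of $\phi$ on walks we have $\phi(W_1\cat W_2)=\phi(W_1)+\phi(W_2)$. Working over $\FF_2$, a subset of $\UU$ containing $0$ and closed under addition is automatically a subspace, so $S_r$ is a subspace.

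Next I would show that each $S_v$ is a coset of $S_r$. Since $G$ is connected, fix any walk $P$ from $r$ to $v$. The claim is that $S_v=\phi(P)+S_r$. For the forward containment, if $W$ is any $r,v$-walk, then $W\cat\ov P$ is a closed walk at $r$, so $\phi(W)+\phi(P)=\phi(W\cat\ov P)\in S_r$, giving $\phi(W)\in\phi(P)+S_r$. Conversely, if $s=\phi(Z)\in S_r$ for some closed walk $Z$ at $r$, then $Z\cat P$ is an $r,v$-walk with $\phi(Z\cat P)=s+\phi(P)$, so $\phi(P)+S_r\esub S_v$. Since all cosets of a subspace have the same cardinality as the subspace, every $S_v$ has size $|S_r|$.

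There is no substantive obstacle here: once the symbols $\phi(W)$, $W\cat Z$, and $\ov W$ are unwound, everything reduces to the observation that concatenation and reversal of walks translate cleanly into vector addition in $\UU$. The only point worth being careful about is checking that the empty walk is allowed as a closed walk at $r$ (needed for $0\in S_r$) and that concatenating a closed walk at $r$ with a walk starting at $r$ is again a walk starting at $r$ — both of which are immediate from the definitions in the excerpt.
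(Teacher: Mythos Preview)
Your proof is correct and follows essentially the same route as the paper's: both establish $0\in S_r$ and closure of $S_r$ under addition via concatenation of closed walks, then show $S_v=\phi(P)+S_r$ (the paper writes it as $S_v=x+S_r$ for arbitrary $x\in S_v$) by concatenating with $\ov P$ for one inclusion and prepending a closed walk for the other. Your side remark that the spec hypothesis is not actually invoked in this argument is also accurate.
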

\begin{proof}
The set $S_r$ consists of the images under $\phi$ of closed walks from $r$
to $r$; it certainly contains $0$.  Also the concatenation of any two closed
walks from $r$ is a closed walk from $r$, so $S_r$ is closed under vector
addition.  Since $\UU$ is binary, this suffices to make $S_r$ a subspace.

Consider $x\in S_v$; note that $x=\phi(W)$ for some $r,v$-walk $W$.  
We show that $S_v$ is the set of vectors that can be obtained by adding a
member of $S_r$ to $x$; that is, $S_v=x+S_r$.

For $y\in S_v$, let $Z$ be an $r,v$-walk with $y=\phi(Z)$.  Note that
$W\cat \ov Z$ is a closed walk from $r$.  Thus
$x+y=\phi(W)+\phi(Z)=\phi(W\cat \ov Z)\in S_r$.
That is, $y=x+(x+y)\in x+S_r$.

Conversely, if $w\in S_r$ and $Y$ is a closed walk from $r$ such that
$\phi(Y)=w$, then $Y\cat W$ is an $r,v$-walk, and 
$x+w=\phi(W)+\phi(Y)=\phi(Y\cat W)\in S_v$.
%
%The cosets have the same size, since $x+y=x+y'$ implies $y=y'$.
\end{proof}

In this setting, the family of cosets is actually a quotient space,
%Let $\UU/S_r$ denote the family of all sets of the form $S_v$ such that
%$v\in V(G)$.  We use this notation because $\UU/S_r$ is actually a quotient
%space,
in which $S_r$ is the zero element.  To keep the presentation self-contained,
we include the argument for this family forming a binary vector space, which
will lead us to a canonical edge-coloring.  We may write the coset containing
a vector $x$ as $[x]$ or as $S_v$, where $v$ is a vertex such that $x\in S_v$.

\begin{lemma}\label{Svspace}
When $\phi$ is a spec, the cosets $\{S_v\st v\in V(G)\}$ form a binary vector
space, with addition defined by $S_u+S_v=\{x+y\st x\in S_u,y\in S_v\}$.
\end{lemma}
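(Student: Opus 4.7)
The plan is to realize the set-theoretic addition $S_u+S_v=\{x+y:x\in S_u,\,y\in S_v\}$ as the natural quotient addition in $\UU/S_r$, and then inherit the vector space axioms from $\UU$. By Lemma~\ref{cosets}, $S_r$ is a subspace of $\UU$ and each $S_v$ is a coset of $S_r$, so the family $\{S_v : v \in V(G)\}$ already sits inside the binary vector space $\UU/S_r$. What remains is to check that the sumset formula coincides with coset addition and satisfies the binary vector space axioms.

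The first key step is to show that $S_u+S_v$ is a single coset of $S_r$. Fixing $x_0\in S_u$ and $y_0\in S_v$, Lemma~\ref{cosets} gives $S_u = x_0+S_r$ and $S_v = y_0+S_r$, so
\[
S_u+S_v=\{(x_0+s)+(y_0+t):s,t\in S_r\}=(x_0+y_0)+(S_r+S_r)=(x_0+y_0)+S_r,
\]
using that $S_r$, being a subspace, is closed under addition. Hence $S_u+S_v$ is the unique coset of $S_r$ containing $x_0+y_0$, independent of the choice of representatives. Next I would verify the axioms: associativity and commutativity transfer directly from $\UU$; the identity is $S_r$, since $S_r+S_v=(0+x_0)+S_r=S_v$ for any $x_0\in S_v$; each coset is its own additive inverse, since $S_v+S_v=(x_0+x_0)+S_r=S_r$ because $\UU$ has characteristic $2$; and scalar multiplication over $\FF_2$ is then forced.

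I expect the only point requiring care to be conceptual rather than technical: the specific family $\{S_v:v\in V(G)\}$ need not itself be closed under this addition, since $|V(G)|$ need not be a power of $2$ and so cannot in isolation carry a binary vector space structure. The statement is therefore most naturally read as asserting that these cosets live as elements within the ambient binary vector space $\UU/S_r$ of all cosets of $S_r$ in $\UU$, with the prescribed sumset formula realizing the quotient addition. Once this is clarified, the remaining algebra is a routine quotient-space verification inherited from the ambient space $\UU$.
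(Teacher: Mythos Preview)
Your proof is correct and reaches the same conclusion as the paper---that $S_u+S_v$ is a single coset of $S_r$---but via a slightly cleaner route. The paper shows directly that any two elements $x+y$ and $x'+y'$ of $S_u+S_v$ differ by an element of $S_r$, by choosing walks $W,W',Z,Z'$ realizing $x,x',y,y'$ and observing that the concatenation $W\cat\ov{W'}\cat Z\cat\ov{Z'}$ is a closed walk from $r$, so its image lies in $S_r$. You instead invoke Lemma~\ref{cosets} to write $S_u=x_0+S_r$ and $S_v=y_0+S_r$ and compute the sumset purely algebraically as $(x_0+y_0)+S_r$; this is shorter and avoids returning to the walk definitions, at the cost of leaning more heavily on what was already packaged in Lemma~\ref{cosets}. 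Your observation that the family $\{S_v:v\in V(G)\}$ need not itself be closed under this addition (since $|V(G)|$ need not be a power of $2$) is also correct and more explicit than the paper, which tacitly treats the cosets as elements of the ambient quotient $\UU/S_r$; this reading is confirmed by the way the lemma is used in the subsequent Theorem~\ref{mainq}.
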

\begin{proof}
We show that $S_u+S_v$ is a coset.  That is, $x+y$ and $x'+y'$ differ by an
element of $S_r$ whenever $x,x'\in S_u$ and $y,y'\in S_v$.  Choose $r,u$-walks
$W$ and $W'$ such that $x=\phi(W)$ and $x'=\phi(W')$, and choose $r,v$-walks
$Z$ and $Z'$ such that $y=\phi(Z)$ and $y'=\phi(Z')$.  The walk
$W\cat \ov W'\cat Z\cat \ov Z'$ is a closed walk from $r$, so
$\phi(W)+\phi(W')+\phi(Z)+\phi(Z')\in S_r$.
\end{proof}

We now obtain a more precise version of Theorem~\ref{thm:canonical_intro}.

\begin{theorem}\label{mainq}
Given a spec $\phi$ of $G$ rooted at $r$, the edge-coloring $\phi^*$ that
assigns to each edge $uv\in E(G)$ the coset $[\phi(uv)]$ is a canonical
edge-coloring of $G$.  In other words, $\phi^*=\pi\circ\phi$, where
$\pi(x)=[x]$.  In particular, $\phi^*$ uses no more colors than $\phi$.
\end{theorem}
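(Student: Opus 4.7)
The plan is to exhibit an explicit binary labeling $f$ that realizes $\phi^*$ as a canonical edge-coloring. The lemmas preceding the statement supply the obvious candidate: define $f\st V(G)\to \UU/S_r$ by $f(v)=S_v$. First I would verify that $f$ is a valid binary labeling. The codomain $\UU/S_r$ is a binary vector space, since by Lemma~\ref{cosets} the set $S_r$ is a subspace of $\UU$. Injectivity of $f$ is immediate from Lemma~\ref{lem:disjoint}: because $\phi$ is a spec, for distinct vertices $u,v$ the sets $S_u$ and $S_v$ are disjoint and hence distinct as cosets.

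The heart of the proof is the identity $\phi^*(uv)=f(u)+f(v)$ for each edge $uv\in E(G)$. Fix an edge $uv$ and, using connectedness of $G$, pick any $r,u$-walk $W$. Following $W$ by the single edge $uv$ gives an $r,v$-walk with $\phi$-value $\phi(W)+\phi(uv)$, so $\phi(W)+\phi(uv)\in S_v$. Writing
\[
\phi(uv)=\phi(W)+\bigl(\phi(W)+\phi(uv)\bigr)
\]
exhibits $\phi(uv)$ as an element of the coset sum $S_u+S_v$ (in the sense of Lemma~\ref{Svspace}). Since cosets of $S_r$ are either equal or disjoint, this forces $[\phi(uv)]=S_u+S_v=f(u)+f(v)$, which is exactly $\phi^*(uv)=f(u)+f(v)$.

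The color-count assertion is immediate from the formulation $\phi^*=\pi\circ\phi$: since $\pi\st x\mapsto[x]$ is a (not necessarily injective) function, the image of $\phi^*$ has cardinality at most that of the image of $\phi$. The only step with any real content is the coset identity in the middle paragraph, and the main obstacle there is essentially notational bookkeeping; once one is comfortable that ``the coset containing $\phi(uv)$'' and ``$S_u+S_v$'' refer to the same object, the concatenation trick together with Lemma~\ref{Svspace} does all the work.
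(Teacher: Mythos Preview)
Your proof is correct and follows essentially the same approach as the paper: both arguments define (explicitly in your case, implicitly in the paper's) the labeling $f(v)=S_v$ into the quotient space and establish the key identity $[\phi(uv)]=S_u+S_v$ via a walk-concatenation argument. The only cosmetic difference is that the paper concatenates $W\cat uv\cat\ov Z$ into a closed walk from $r$, whereas you concatenate $W\cat uv$ into an $r,v$-walk; these yield the same coset identity.
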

\begin{proof}
With the cosets forming a binary vector space as in Lemma~\ref{Svspace},
it suffices to prove $[\phi(uv)]=S_u+S_v$.
Choose $x\in S_u$ and $y\in S_v$ such that $x=\phi(W)$ and $y=\phi(Z)$, where
$W$ is an $r,u$-walk and $Z$ is an $r,v$-walk.  Since $W\cat uv\cat \ov Z$
is a closed walk from $r$, we have $x+\phi(uv)+y\in S_r$.
This yields $[\phi(uv)]=[x+y]=S_u+S_v$.

Since $\phi(uv)$ is a well-defined vector (of weight $1$) in $\UU$,
every color class under $\phi$ is mapped into a single color class
under $\phi^*$.  Multiple color clases under $\phi$ may be mapped into
a single class under $\phi^*$, but $\phi^*$ uses no more colors than $\phi$.
\end{proof}

We provide an example to show how the renaming and collapsing of colors
may occur in obtaining the canonical edge-coloring from a spec $\phi$.

\begin{example}
Consider the cycle $C_4$ with vertices $r,s,t,u$ in order, and
let $\phi$ assign the colors $1,2,1,3$ to the edges $rs,st,tu,ur$ in order.
In the atomic form, $\phi$ encodes these colors
as $100$, $010$, $100$, $001$ in $\UU$, respectively (we
can ignore all coordinates after the third).  Letting $r$ be the root,
we have $S_r=\{000,011\}$, $S_s=\{100,111\}$, $S_t=\{110,101\}$, and
$S_u=\{010,001\}$.  The construction in the proof produces $\phi^*$ with
$\phi^*(rs)=\{100,111\}$, $\phi^*(st)=\{010,001\}$, $\phi^*(tu)=\{100,111\}$,
and $\phi^*(ur)=\{010,001\}$.  Only two colors are used in $\phi^*$, and
$\phi^*$ is a canonical edge-coloring.  Relabeling $1$ as $\{100,111\}$
and both $2$ and $3$ as $\{010,001\}$ expresses $\phi$ as a refinement of
the canonical edge-coloring $\phi^*$.
\end{example}

As noted in the introduction, Theorem~\ref{mainq} completes the proof
of $\spec(K_{s,t})=s\circ t$.  It also enables us to directly answer
Question~\ref{q:log}.  Let $\im(\phi)$ denote the image of a map $\phi$.
Also write $\ang{S}$ for the vector space that is the span of a set $S$
of vectors.

\bigskip
\noindent
{\bf Theorem~\ref{lgn}.}
If $G$ is a connected $n$-vertex graph, then $\spec(G)= \lceil\lg n\rceil$
if and only if $G$ is a subgraph of the hypercube $Q_{\lceil \lg n \rceil}$.

\begin{proof}
Let $f$ be a labeling of $V(G)$ whose associated canonical edge-coloring $\phi$
is an optimal spec of $G$, as guaranteed by Theorem~\ref{main}.  For a fixed
vertex $z\in V(G)$, adding $f(z)$ to the image of each vertex under $f$ does
not change $\phi$, so we may assume $f(z)=0$.

By definition $\phi(uv)=f(u)+f(v)$ for each edge $uv$, and for each vertex $v$
we have $f(v)$ as the sum of the colors along a $z,v$-path.  Thus the sets of
colors under $f$ and $\phi$ have the same span; that is,
$\ang{\im(f)}=\ang{\im(\phi)}$.

Let $k = \lceil\lg n\rceil$. By Proposition~\ref{prop:subQk},  $\spec(G)\ge k$. 
If equality holds, then $|\im(\phi)|=k$.  Since $f$ is injective,
$2^{\dim\ang{\im(f)}}\geq n$.
Thus $\dim\ang{\im(\phi)} = \dim\ang{\im(f)} \geq k$.  Since $|\im(\phi)|=k$,
the vectors in $\im(\phi)$ must be linearly independent.
Choosing $\im(\phi)$ as a basis maps $\ang{\im(\phi)}$ to the binary vector
space $\FF_2^k$ taking the vectors in $\im(\phi)$ to those with exactly one
nonzero coordinate.  Since $\im(f)$ and $\im(\phi)$ have the same span,
this isomorphism also transforms $f$ into an injection from $V(G)$ to
$V(Q_k)$ that expresses $G$ as a subgraph of $Q_k$.
	
Conversely, if $G$ is a subgraph of $Q_k$, then viewing the vertices of
$Q_k$ as binary vectors and taking $f$ to be the identity map yields we get
$|\im(\phi)| \leq k$.
\end{proof}	

\section{Lower bounds for $\spec(G)$}\label{sec:lower}
In this section, we use the algebraic structure of the previous section
to obtain new lower bounds on the spec number of a graph.
Again we consider a connected $n$-vertex graph $G$ rooted at vertex $r$
and take an edge-coloring $\phi$ presented as an atomic edge-coloring.
We write $|\phi|$ for the number of colors used by $\phi$,
so $|\phi|=|\im(\phi)|$.

\begin{lemma} \label{lem:Sr}
If $\phi$ is a spec of an $n$-vertex connected graph $G$,
then $|\phi| \geq \lg n + \lg |S_r|$. 
\end{lemma}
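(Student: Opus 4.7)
The plan is to exploit the coset structure of $\{S_v : v\in V(G)\}$ established in Lemmas \ref{cosets} and \ref{lem:disjoint}, together with the observation that every $S_v$ is contained in the span of the image of $\phi$.

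First I would record that because $\phi(W)=\sum_{e\in W}\phi(e)$ is, by definition, a binary linear combination of atoms in $\im(\phi)$, every element of every $S_v$ lies in $\langle\im(\phi)\rangle$. Since $|\im(\phi)|=|\phi|$, this subspace has dimension at most $|\phi|$, so it contains at most $2^{|\phi|}$ vectors.

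Second, I would use Lemma \ref{lem:disjoint} to observe that the $n$ sets $S_v$ are pairwise disjoint, and Lemma \ref{cosets} to observe that each has cardinality exactly $|S_r|$. Therefore
\[
n\cdot|S_r|=\sum_{v\in V(G)}|S_v|=\Big|\bigcup_{v\in V(G)}S_v\Big|\le|\langle\im(\phi)\rangle|\le 2^{|\phi|}.
\]
Taking $\lg$ of both sides yields $|\phi|\ge\lg n+\lg|S_r|$, which is the desired bound.

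I do not foresee a real obstacle here: the two prior lemmas already package all the structural content, and the argument reduces to a counting step inside $\langle\im(\phi)\rangle$. The only subtlety to be careful about is making sure the containment $S_v\subseteq\langle\im(\phi)\rangle$ is justified from the original atomic form of $\phi$ rather than from the quotient coloring $\phi^*$ of Theorem \ref{mainq}, so that the bound is genuinely in terms of $|\phi|$ and not of the canonical refinement.
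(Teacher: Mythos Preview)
Your proof is correct and matches the paper's argument essentially line for line: both use Lemma~\ref{lem:disjoint} for disjointness, Lemma~\ref{cosets} for equal sizes, and the containment $\bigcup_v S_v\subseteq\langle\im(\phi)\rangle$ to obtain $n|S_r|\le 2^{|\phi|}$. The only cosmetic difference is that the paper notes $|\langle\im(\phi)\rangle|=2^{|\phi|}$ exactly (since the atoms are linearly independent), whereas you use the weaker inequality $\le 2^{|\phi|}$, which of course suffices.
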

\begin{proof}
By Lemma~\ref{cosets}, the cosets $S_v$ have the same size.
By Lemma \ref{lem:disjoint}, $S_u \cap S_v = \nul$ for distinct $u,v\in V(G)$.
The cosets all lie in the span of $\im(\phi)$.
Since $\phi$ is atomic, the size of the span is $2^{|\phi|}$.
These four statements yield the four steps in the following computation.
\[
n |S_r| = \sum_{v \in V(G)} |S_v| 
= \Bigl |  \bigcup_{v \in V(G)} S_v  \Bigr |
\leq |\langle \im(\phi) \rangle | = 2^{|\phi|}.
\]
Thus ${|\phi|} \geq \lg n + \lg |S_r|$. 
\end{proof}

Note that Lemma~\ref{lem:Sr} strengthens Proposition~\ref{prop:subQk}.
We use it to give another short proof of Theorem~\ref{lgn}.

\bigskip
\noindent
{\bf Theorem~\ref{lgn}.}
If $G$ is a connected graph on $n$ vertices, then $\spec(G)= \lceil\lg n\rceil$
if and only if $G$ is a subgraph of the hypercube $Q_{\lceil \lg n \rceil}$.
\begin{proof}
As always, $\spec(G)\ge p(G)\ge\CL{\lg n}$.  By Lemma~\ref{lem:Sr},
$|\phi|=\CL{\lg n}$ requires $S_r$ to contain only the $0$-vector.

Given a cycle $C$, let $P$ be a path from $r$ to a vertex of $C$.  The
concatenation $P\cat C\cat\ov P$ is a closed walk from $r$, so
$\phi(C)=\phi(P)+\phi(C)+\phi(P)\in S_r$.  Now $S_r=\{0\}$ requires
$C$ to be a parity walk.  Hence $G$ satisfies the characterization by
Havel and Mor\'avek~\cite{HM} mentioned in the introduction:
$G\esub Q_k$ if and only if $G$ has a pec $\phi$ using at most $k$ colors such
that every cycle is a parity walk.
\end{proof}

Next we develop a quantitative tool for lower bounds on $\spec(G)$ that is
based on the structure of $G$.

\begin{definition}
Given an ordering $\VEC v1n$ of the vertices of $G$, let $d^-(v_i)$ denote the
number of neighbors of $v_i$ among $\VEC v1{i-1}$; this is the
\emph{back-degree} of $v_i$ relative to the given ordering.  Vacuously 
let $d^-(v_1)=0$.  A subset $T$ of $V(G)$ is \emph{saturating} with respect to
the ordering $\VEC v1n$ if $|T \cap \{\VEC v2k\}| \geq \lg{k}$ for $k \leq n$.
\end{definition}
 
\begin{lemma}\label{l:lowerBound}\label{lem:density}
If $(\VEC v1n)$ is an ordering of the vertices of an $n$-vertex connected
graph $G$ such that each vertex after the first has an earlier neighbor, then
$$
\hat{p}(G) \geq
\min_{T\in\bT} \sum_{v \in T}d^-(v),
$$
where $\bT$ is the family of saturating subsets with respect to $\VEC v1n$.
\end{lemma}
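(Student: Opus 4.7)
The plan is to pass to a canonical refinement via Theorem~\ref{main} and then build a saturating set $T$ by filtering the vertices according to when the span of the labels grows. Given any spec $\phi$ of $G$, Theorem~\ref{main} yields a canonical edge-coloring $\phi^*$ generated by a binary labeling $f\st V(G)\to\VV$, with $|\phi^*|\le|\phi|$. Since translating $f$ by a constant does not change $\phi^*$, I would assume $f(v_1)=0$.

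For $1\le i\le n$ set $U_i=\ang{f(v_1),\ldots,f(v_i)}$ and
$$T=\{v_i\st i\ge2,\ \dim U_i>\dim U_{i-1}\}.$$
Since $f$ is injective, $|U_k|\ge k$, so $\dim U_k\ge\lg k$. By construction $|T\cap\{v_2,\ldots,v_k\}|=\dim U_k-\dim U_1=\dim U_k\ge\lg k$, so $T$ is saturating.

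The heart of the argument is to show that the back-edges of vertices of $T$ receive pairwise distinct colors under $\phi^*$, which then yields $|\phi|\ge|\phi^*|\ge\sum_{v\in T}d^-(v)\ge\min_{T'\in\bT}\sum_{v\in T'}d^-(v)$. For $v_i\in T$ and $j<i$, we have $f(v_i)\notin U_{i-1}$ while $f(v_j)\in U_{i-1}$, so $\phi^*(v_iv_j)=f(v_i)+f(v_j)\in U_i\setminus U_{i-1}$. Two back-edges $v_iv_j$ and $v_iv_{j'}$ at the same $v_i\in T$ receive distinct colors because $f$ is injective on $\{v_1,\ldots,v_{i-1}\}$. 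Back-edges at distinct $v_i,v_{i'}\in T$ with $i<i'$ receive colors lying in $U_i\subseteq U_{i'-1}$ and in $\VV\setminus U_{i'-1}$ respectively, and so also differ.

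The main obstacle is identifying the right saturating set $T$: the slick observation is that marking exactly the vertices at which the span of $f$ gains a dimension simultaneously yields the $\lg k$ saturating bound (as a direct consequence of injectivity of $f$) and places the back-edge colors of different elements of $T$ in disjoint ``shells'' $U_i\setminus U_{i-1}$, delivering both requirements at once. Once $T$ is chosen this way, the remaining verifications are immediate.
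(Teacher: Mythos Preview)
Your proof is correct and follows essentially the same approach as the paper: reduce to a canonical edge-coloring via Theorem~\ref{main}, normalize $f(v_1)=0$, take $T$ to be the set of vertices at which $\dim\ang{f(v_1),\ldots,f(v_i)}$ increases, verify that $T$ is saturating by injectivity of $f$, and observe that back-edges at vertices of $T$ receive colors outside the previous span. The only cosmetic difference is in the final tally: the paper counts incrementally via $|\phi^*_i|-|\phi^*_{i-1}|=d^-(v_i)$ for $v_i\in T$, whereas you argue directly that all such back-edge colors are pairwise distinct using the ``shell'' decomposition $U_i\setminus U_{i-1}$; both yield the same bound.
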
	

\begin{proof}
It suffices to show for every canonical edge-coloring $\phi$ that there is
at least one saturating set $T$ such that $|\phi|\ge\sum_{v\in T}d^-(v)$.
Let $f$ be an injective labeling of $V(G)$ that generates $\phi$.
Adding $f(v_1)$ to each $f(v_i)$ does not change the associated 
canonical edge-coloring $\phi$, so we may assume $f(v_1)=0$.

For $1\le i \le n$, let $G_i$ be the subgraph of $G$ induced by 
$\{\VEC v1i\}$.  Since each vertex after $v_1$ has an earlier neighbor,
each $G_i$ is connected.  Let $f_i$ be the restriction of $f$ to $V(G_i)$,
and let $\phi_i$ be the restriction of $\phi$ to $E(G_i)$.
Note that $\dim\ang{\im(f_1)}=0$.

Let $T$ be the set of vertices $v_i$ with $i\ge2$ such that
$f(v_i) \not \in \ang{\im(f_{i-1})}$.  Since $f(v_i)$ is a single vector,
$\dim\ang{\im(f_i)}= \dim\ang{\im(f_{i-1})}+1$ for $v_i \in T$, and
$\dim\ang{\im(f_i)}= \dim\ang{\im(f_{i-1})}$ for $v_i\notin T$.
For $k\le n$ this yields
$|T\cap\{\VEC v2k\}|= \dim\ang{\im(f_k)} \geq \lg{k}$, where the last
inequality holds because $f$ is injective.  Thus $T$ is saturating.

For $v_i \in T$ and $v_jv_i \in E(G)$ with $j<i$, we have
$\phi(v_jv_i) = f(v_j) + f(v_i) \notin \ang{\im(f_{i-1})}$.
Since $\im(\phi_{i-1})\esub \ang{\im(f_{i-1})}$ and $\phi$ is injective on
$E(G_i)-E(G_{i-1})$ (because $\phi$ is a proper edge-coloring), it follows that
$|\phi_i|-|\phi_{i-1}| = d^-(v_i)$.
Thus $|\phi|\ge\sum_{i\in I}d^-(v_i)$, as desired.
\end{proof}

\begin{corollary}\label{cor:density}
If $\phi$ is a spec for  $G$ with vertex ordering $\VEC v1n$ such that
$d^-(v_i)>0$ for $i\ge2$, then $|\phi|\ge L$, where $L$ is the least sum of
$\CL{\lg n}$ back-degrees.
\end{corollary}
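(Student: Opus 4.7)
The plan is to obtain Corollary~\ref{cor:density} as a short consequence of Lemma~\ref{lem:density}: the point is that once every back-degree $d^-(v_i)$ for $i\ge2$ is strictly positive, one may replace the minimization over saturating subsets by a minimization over arbitrary $\CL{\lg n}$-element subsets, which is exactly the quantity $L$.

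First I would invoke Lemma~\ref{lem:density} to get $|\phi|\ge\min_{T\in\bT}\sum_{v\in T}d^-(v)$, where $\bT$ is the family of saturating subsets with respect to $\VEC v1n$. Specializing the saturating condition at $k=n$ forces $|T\cap\{\VEC v2n\}|\ge\lg n$; since the left side is an integer, in fact $|T\cap\{\VEC v2n\}|\ge\CL{\lg n}$. Because $d^-(v_1)=0$, the vertex $v_1$ contributes nothing to the sum, so
\[
\sum_{v\in T}d^-(v)=\sum_{v\in T\cap\{\VEC v2n\}}d^-(v).
\]

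It then suffices to show that the right-hand side is at least $L$ for every $T\in\bT$. This is where the positivity hypothesis enters: since each $d^-(v_i)$ with $i\ge2$ is a positive integer, the sum of any $\CL{\lg n}$ of them is at least the sum $L$ of the $\CL{\lg n}$ smallest, and appending additional (positive) terms can only increase the total. Applying this to $T\cap\{\VEC v2n\}$, whose size is at least $\CL{\lg n}$, yields $\sum_{v\in T}d^-(v)\ge L$, and chaining with the bound from Lemma~\ref{lem:density} gives $|\phi|\ge L$.

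There is no genuine obstacle here; the only thing to notice is that the saturating constraint is a strictly stronger condition than simply having size $\CL{\lg n}$, so the saturating minimum bounds the unconstrained minimum $L$ from above. Positivity of the back-degrees is exactly what is needed to reverse this and still obtain a useful inequality in the desired direction.
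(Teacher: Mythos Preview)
Your argument is correct and follows the same route as the paper, which simply records that every saturating set has size at least $\CL{\lg n}$ and leaves the rest implicit. One small remark: the place where the hypothesis $d^-(v_i)>0$ for $i\ge2$ is genuinely needed is in invoking Lemma~\ref{lem:density} (which requires each vertex after the first to have an earlier neighbor), not in the size-versus-sum step, where mere nonnegativity of back-degrees already suffices.
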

\begin{proof}
Every saturating set has size at least $\CL{\lg n}$.
\end{proof}

\begin{remark}\label{cycimage}
Before leaving this section, we describe an alternative lower bound on
$\spec(G)$ using Lemma~\ref{lem:Sr}.  This tool is not as easy to prove or
apply as Lemma~\ref{lem:density}, but it sometimes gives stronger bounds,
as we will note in Remark~\ref{altbip}.

When we consider the cosets $S_v$ associated with an atomic spec $\phi$ on
a graph $G$ rooted at vertex $r$, the set $S_r$ is the nullspace of a
linear transformation.  It is easy to show that $S_r$ is the span of the set of
images under $\phi$ of cycles in $G$; we call this vector space $\cC_\phi$.
The equality $S_r=\cC_\phi$ is the essence of our second proof of
Theorem~\ref{lgn} above.  Since $\cC_\phi$ is a binary vector space, we can
view Lemma~\ref{lem:Sr} as stating
$$|\phi|\ge\lg n+\dim\cC_\phi,$$
and our task becomes finding lower bounds on $\dim\cC_\phi$. 
Again the back-degrees are relevant and yield a general lower bound that
again implies Corollary~\ref{cor:density}:

{\narrower\smallskip\noindent\it
Let $\phi$ be a spec of $G$ with vertex ordering $\VEC v1n$ such that
$d^-(v_i)\ge1$ for $2\le i\le n$.  If the back-degrees of any $t$ distinct
vertices (not including $v_1$) sum to more than $|\phi|$, then
$\dim \cC_\phi>|\phi|-t$.
\par\smallskip
}

\noindent
We briefly sketch the proof.  As in the proof of Lemma~\ref{lem:density},
consider the subgraphs $G_i$ and maps $f_i$ and $\phi_i$ in order.  Instead of
the indices $i$ such that $f(v_i)\notin \ang{\im(f_{i-1})}$, focus on the
indices $i$ such that $\im(\phi_i)\not\esub \im(\phi_{i-1})$.  If $k$
new colors occur on the edges from $v_i$ to earlier neighbors, then we add $k$
or at least $k-1$ vectors to a basis for $\cC_\phi$, depending on whether some
edge $v_jv_i$ with $j<i$ has a color already seen.  Hence
$\dim\cC_\phi\ge|\phi|-q$, where $q$ is the number of indices $i$ such that all
edges from $v_i$ to earlier neighbors have distinct new colors.  Since the
back-degrees of all such vertices sum to at most $|\phi|$, we have $q<t$.
\end{remark}

\section{Powers of paths}\label{sec:pathpower}

We apply Corollary \ref{cor:density} to obtain a lower bound for $\spec(G)$
when $G$ is a power of a path, and we will then obtain an upper bound that
is only slightly larger.  The \emph{$\ell$th power} of a graph $H$, written
$H^{\ell}$, is the graph with vertex set $V(H)$ in which distinct vertices $u$
and $v$ are adjacent if and only if the distance between $u$ and $v$ in $H$ is
at most $\ell$.

\begin{theorem} \label{thm:path}
If $n$ and $\ell$ are integers with $1\le \ell\le\CL{\lg n}$, then
\[\spec(P_n^\ell)> \ell \lceil \lg n \rceil - \binom{\ell + 1}{2}.\]
\end{theorem}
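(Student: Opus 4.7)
The plan is to apply Lemma~\ref{lem:density} to the natural vertex ordering $v_1,\ldots,v_n$ of $P_n^\ell$ (in which $v_i$ and $v_j$ are adjacent iff $|i-j|\le\ell$), and then explicitly minimize the back-degree sum over saturating sets. Under this ordering each vertex $v_i$ has exactly $\min(i-1,\ell)$ earlier neighbors, so $d^-(v_i)=\min(i-1,\ell)$. In particular $d^-(v_i)>0$ for $i\ge 2$, so the hypothesis of Lemma~\ref{lem:density} is satisfied.

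To identify the minimum of $\sum_{v\in T}d^-(v)$, I would first note that evaluating the saturating condition at $k=n$ forces $|T\cap\{v_2,\ldots,v_n\}|\ge\CL{\lg n}$; the vertex $v_1$ contributes nothing to any saturating count, even though its back-degree is $0$. Since the back-degrees of $v_2,\ldots,v_n$ form the nondecreasing sequence $1,2,\ldots,\ell,\ell,\ldots,\ell$, the cheapest saturating set (in the case $\ell<\CL{\lg n}$) is obtained greedily by choosing $v_2,\ldots,v_{\ell+1}$ together with $\CL{\lg n}-\ell$ additional vertices of back-degree $\ell$. A short check shows that this set is indeed saturating, since $|T\cap\{v_2,\ldots,v_k\}|=\min(k-1,\CL{\lg n})\ge\CL{\lg k}$ for every $k\le n$. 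Its total weight is
\[
\binom{\ell+1}{2}+(\CL{\lg n}-\ell)\ell=\ell\CL{\lg n}-\binom{\ell}{2},
\]
and the boundary case $\ell=\CL{\lg n}$ yields the same value via $T=\{v_2,\ldots,v_{\ell+1}\}$.

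To finish, since $\binom{\ell+1}{2}-\binom{\ell}{2}=\ell\ge 1$, Lemma~\ref{lem:density} gives
\[
\spec(P_n^\ell)\ge\ell\CL{\lg n}-\binom{\ell}{2}=\ell\CL{\lg n}-\binom{\ell+1}{2}+\ell>\ell\CL{\lg n}-\binom{\ell+1}{2},
\]
which is the desired strict inequality. The main step requiring care is the optimality of the proposed greedy saturating set: because the back-degree sequence is nondecreasing and plateaus at $\ell$, any saturating set of minimum cost must include as many of the smallest-back-degree vertices (namely $v_2,v_3,\ldots,v_{\ell+1}$) as possible, after which the remaining constraint is just that the total size hits $\CL{\lg n}$, so additional vertices of back-degree $\ell$ are unavoidable. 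Once this combinatorial point is settled, the verification of the saturating condition at intermediate $k$ reduces to the elementary inequality $k-1\ge\CL{\lg k}$ for $k\ge 2$, and nothing further is needed.
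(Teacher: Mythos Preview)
Your proposal is correct and follows essentially the same approach as the paper: both apply the density lower bound (Lemma~\ref{lem:density}/Corollary~\ref{cor:density}) to the natural ordering of $P_n^\ell$, where the back-degrees are $0,1,\ldots,\ell-1,\ell,\ell,\ldots$. The paper invokes Corollary~\ref{cor:density} directly and records the least sum of $\CL{\lg n}$ back-degrees as $\ell\CL{\lg n}-\binom{\ell+1}{2}$, while you work from Lemma~\ref{lem:density} and observe more carefully that a saturating set must contain at least $\CL{\lg n}$ vertices from $\{v_2,\ldots,v_n\}$ (so $v_1$'s back-degree $0$ cannot be used), giving the sharper value $\ell\CL{\lg n}-\binom{\ell}{2}$ and a clean justification of the strict inequality. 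Your exhibition of an explicit optimal saturating set $\{v_2,\ldots,v_{1+\CL{\lg n}}\}$ and the check $k-1\ge\CL{\lg k}$ are not strictly needed for the lower bound but confirm that Lemma~\ref{lem:density} cannot give more here.
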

\begin{proof}
When the vertices of $P_n^\ell$ are placed in the same order $\VEC v1n$
as along the underlying path, we have $d^-(v_i)=i-1$ for $1\le i\le\ell$
and $d^-(v_i)=\ell$ for $\ell<i\le n$.  Hence for $k\ge\ell$ the least sum of
$k$ back-degrees is $\ell k-\binom{\ell+1}2$.  Set $k=\CL{\lg n}$ and apply
Corollary~\ref{cor:density}.
\end{proof}

We next describe a canonical edge-coloring for the path-power $P_n^\ell$
that establishs an almost tight upper bound on $\spec(P_n^\ell)$.  The coloring
is based on the standard Gray code for binary lists (\cite{Gil,RND}).
In this section we use the notation $\la c\ra$ for a sequence whose entries
have the form $c_i$.  Also let $[k]$ denote the set $\{1,\ldots,k\}$.

\begin{construction}\label{constrPnl}
Form a sequence $\la c\ra$ by assigning value $j$ to the positions
indexed by odd multiples of $2^{j-1}$.  Thus $c_i$ is $1$ plus the exponent on
the highest power of $2$ dividing $i$, and the sequence begins
$$1,2,1,3,1,2,1,4,1,2,1,3,1,2,1,5,1,2,\ldots.$$
We use the term {\it window} to describe a string $(\VEC c{i+1}j)$ of
consecutive entries in $\la c\ra$.

An equivalent description of $\la c\ra$ is useful for
many proofs.  Let $A_k$ denote the list of the first $2^{k-1}$ entries in 
$\la c\ra$.  Recursively, we start with $A_1$ being the single entry $1$
and for $k>1$ let $A_k$ be the concatenation $A_{k-1},k,A_{k-1}$.
Note that $k$ is the largest entry in $A_k$, the middle entry of $A_k$
is its only copy of $k$, and the reverse of $A_k$ is the same as $A_k$.

Let $e_j$ be the atom in $\UU$ having its $1$ in coordinate $j$ 
(the $j$th ``standard basis vector'').  Convert $\la c\ra$ to a sequence
$\la a\ra$ of atoms in $\UU$ by letting $a_i=e_{c_i}$.
Let $s_i=\SE j1i a_j$, with the sum taken in $\UU$; the empty sum $s_0$ is
the $0$-vector.  The sequence $s_0, s_1, \dots$ shows the elements of
$\UU$ in order as they are visited by the standard binary Gray code.
The coordinate $c_i$ is the coordinate that changes when moving from $s_{i-1}$
to $s_i$ in Gray code.  Thus $s_{i-1}+s_i=a_i$.

In this construction we index the vertices of $P_n^\ell$ as $\VEC v0{n-1}$.
Let $\phi$ be the canonical coloring of $P_n^\ell$ generated by assigning label
$s_i$ to vertex $v_i$ for $0\le i\le n-1$.  The color on an edge $v_iv_j$ is
thus $s_i+s_j$ when $1\le j-i\le\ell$, and this color equals $\VECOP a{i+1}j+$.
When $j>i$, the {\it length} of an edge $v_iv_j$ (or the corresponding
window $(\VEC c{i+1}j)$ in $\la c\ra$) is $j-i$.
We maintain the notation of this construction for the
remainder of this section.
\end{construction}

\begin{remark}\label{rem:colors}
Having defined the canonical coloring $\phi$ of $P_n^\ell$ using vectors in
$\UU$, we know that it is a spec, by Observation~\ref{obs:canon}.  With each
color being a vector in $\UU$, we can equivalently view the colors under $\phi$
as sets of nonnegative integers, where the color set corresponding to a vector
$x$ is the set of coordinates whose entry in $x$ is $1$.

By the definition of $\la c\ra$, the largest coordinate that can be nonzero
in the colors under $\phi$ is $\CL{\lg n}$, so we can view the colors as
subsets of the set $[\CL{\lg n}]$.  The color on $v_{j-1}v_j$ is then
$c_j$, and more generally the color on $v_iv_j$ is the set consisting of all
integers appearing an odd number of times in the window $(\VEC c{i+1}j)$.
We write this color also as $\phi(\VEC c{i+1}j)$.

The windows ending with the first appearance of $k$ at $c_{2^{k-1}}$ have $k$
as their largest element, appearing once in the window.  The windows
$(\VEC ci{2^{k-1}})$ as $i$ runs from $2^{k-1}-1$ to $0$ have colors
consisting of $k$ and the subsets of $[k-1]$ in Gray code order (since the
sequence of coordinate changes in the Gray code on $k-1$ coordinates is
unchanged by reversal).  Hence these edges provide $\min\{\ell,2^{k-1}\}$
colors on $E(P_n^\ell)$ in which $k$ is the largest element.
\end{remark}

We will show that the colors with largest element $k$ that are described
in Remark~\ref{rem:colors} are in fact all the colors under $\phi$ in which $k$
is the largest element.  We first obtain an elementary property of the 
windows in $\la c\ra$.

\begin{lemma}\label{morek}
In any window in $\la c\ra$, the largest element appears exactly once.
\end{lemma}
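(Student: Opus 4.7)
The plan is to reduce the lemma to a simple $2$-adic observation: between any two occurrences of the value $k$ in $\la c\ra$ there must lie a position whose value is strictly greater than $k$. This will immediately force the largest value in any window to be unique, since two copies of the maximum would force an even larger value to appear between them.

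More concretely, I would proceed as follows. Suppose a window $(\VEC c{i+1}j)$ has largest element $k$, and assume for contradiction that $k$ occurs at two positions $p_1<p_2$ with $p_1,p_2\in\{i+1,\dots,j\}$. By the definition of $\la c\ra$, $c_m=k$ holds exactly when $m$ is an odd multiple of $2^{k-1}$. Thus $p_1=(2a-1)2^{k-1}$ and $p_2=(2b-1)2^{k-1}$ for some positive integers $a<b$, so $p_2-p_1$ is a positive even multiple of $2^{k-1}$; in particular $p_2-p_1\ge 2^k$. Then the integer $q=p_1+2^{k-1}=2a\cdot 2^{k-1}=a\cdot 2^k$ is a multiple of $2^k$, and $p_1<q<p_2$, so $q$ lies in the window. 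But $c_q\ge k+1$, contradicting the maximality of $k$ in the window.

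The only step that requires any care is confirming that $q=p_1+2^{k-1}$ really lies strictly between $p_1$ and $p_2$, which follows at once from $p_2-p_1\ge 2^k>2^{k-1}>0$. The rest is just unwinding the definition $c_i=1+v_2(i)$, where $v_2$ is the $2$-adic valuation. There is no real obstacle here; the lemma is essentially a restatement of the fact that any interval of length at least $2^k$ in $\NN$ contains a multiple of $2^k$, packaged through the ruler-sequence description of $\la c\ra$. One could alternatively prove it inductively from the recursion $A_k=A_{k-1},k,A_{k-1}$, using that the unique copy of $k$ in $A_k$ sits at the middle position, but the $2$-adic argument above is shorter and more transparent.
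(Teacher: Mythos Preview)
Your proof is correct. The paper takes the recursive route you mention at the end: it lets $k$ be the least index such that (a copy of) $A_k$ contains the window $W$; by minimality $W$ is not contained in either of the two $A_{k-1}$ halves of $A_k$, so $W$ must straddle the central position of $A_k$, which carries the unique $k$ and is the maximum of $A_k$. Your $2$-adic argument sidesteps the recursion entirely and works directly from the formula $c_i=1+v_2(i)$, turning the lemma into the pigeonhole-type observation that between any two odd multiples of $2^{k-1}$ there sits a multiple of $2^k$. The two arguments are of comparable length; yours is arguably cleaner because it avoids having to track which ``copy'' of $A_k$ one is sitting in, while the paper's version has the minor advantage of reinforcing the recursive picture of $A_k$ that is used again in the lemmas that follow.
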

\begin{proof}
Let $W$ be a window in $\la c\ra$.  Since $A_1,A_2,\ldots$ are successively
longer initial segements of $\la c\ra$, we may let $k$ be the least $i$ such
that $A_i$ contains $W$.  Since $A_{k-1}$ does not contain $W$, the window
contains the element $k$ at the center of $A_k$.  This is the largest element
in $A_k$, appearing only once, and hence the same is true for $W$.
\end{proof}

Every appearance of $k$ in $\la c\ra$ is at a position that is an odd
multiple of $2^{k-1}$ and lies at the center of a window $W$ of length $2^k-1$
that is a copy of $A_k$.  Any window in which this copy of $k$ is the 
largest element is contained in $W$.  Hence we may restrict our
attention to windows in $A_k$ containing the central element $k$.

\begin{lemma}\label{smallk}
If $\ell\ge2^{k-1}$, then every color under $\phi$ arising from a window in
$\la c\ra$ in which $k$ is the largest element arises from a window ending at a
copy of $k$.
\end{lemma}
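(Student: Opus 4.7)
The plan is to show that, under the hypothesis $\ell\ge 2^{k-1}$, every color of the form $e_k+x$ with the support of $x$ contained in $[k-1]$ is already realized by a window of length at most $2^{k-1}$ ending at some fixed copy of $k$. Combined with Lemma~\ref{morek}, this yields the conclusion immediately, since by Lemma~\ref{morek} any window $W$ with largest element $k$ contains $k$ exactly once and hence has color of the form $e_k+x$ with $\mathrm{supp}(x)\esub[k-1]$.

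First I would fix a window $W$ in $\la c\ra$ whose largest element is $k$. By Lemma~\ref{morek}, $k$ appears in $W$ exactly once, say at position $m$; since $k$ first appears in $\la c\ra$ at position $2^{k-1}$, this forces $m\ge 2^{k-1}$. Writing $\phi(W)=e_k+x$, the support of $x$ is contained in $[k-1]$, and this will be the target color.

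Second, for each $r\in\{1,\dots,2^{k-1}\}$ I would consider the window $W_r=(c_{m-r+1},\dots,c_m)$, of length $r\le 2^{k-1}\le\ell$, ending at the copy of $k$ at position $m$. Since $c_i$ depends only on the $2$-adic valuation of $i$, a short calculation shows that the positions $m-2^{k-1}+1,\dots,m+2^{k-1}-1$ form an exact copy of $A_k$ centered at $m$; in particular, its left half $(c_{m-2^{k-1}+1},\dots,c_{m-1})$ coincides with $(c_1,\dots,c_{2^{k-1}-1})=A_{k-1}$. Because $A_{k-1}$ is a palindrome, the suffix of length $r-1$ of this block is the reverse of its prefix of length $r-1$, so reversing (which preserves XOR-sums) gives
\[
\phi(W_r)=e_k+\phi(c_1,\dots,c_{r-1})=e_k+s_{r-1}.
\]

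Finally, I would invoke the Gray code property: since $c_1,\dots,c_{2^{k-1}-1}$ take values in $[k-1]$, the partial sums $s_0,s_1,\dots,s_{2^{k-1}-1}$ execute the standard binary reflected Gray code on $k-1$ bits and hence enumerate all $2^{k-1}$ vectors in $\ang{e_1,\dots,e_{k-1}}$ without repetition. Choosing the $r$ with $s_{r-1}=x$ yields $\phi(W_r)=e_k+x=\phi(W)$, and $W_r$ ends at the copy of $k$ at $m$, as required. The main obstacle is verifying the self-similar $A_k$-structure around a \emph{non-initial} copy of $k$; this reduces to the short observation that if $m=(2s+1)2^{k-1}$ and $1\le t\le 2^{k-1}-1$, then the $2$-adic valuation of $m\pm t$ equals that of $t$, so $c_{m\pm t}=c_t$.
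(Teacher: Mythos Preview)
Your proof is correct and follows essentially the same approach as the paper: both arguments observe that the $2^{k-1}$ windows of length $1,\dots,2^{k-1}$ ending at a fixed copy of $k$ produce the colors $e_k+s_0,\dots,e_k+s_{2^{k-1}-1}$, which by the Gray code property exhaust all colors with largest element $k$. The only difference is that the paper works at the \emph{first} copy of $k$ (position $2^{k-1}$), where the left half is literally $A_{k-1}$ by definition, whereas you work at the copy sitting inside $W$ and therefore need the extra self-similarity verification $c_{m\pm t}=c_t$; this step is correct but unnecessary for the lemma as stated.
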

\begin{proof}
When $k$ is the largest element in a window in $\la c\ra$, the color on that
window consists of $k$ and some subset of $[k-1]$.  The binary sums of the
windows in $A_{k-1}$ that end with the last position of $A_{k-1}$ (together
with the empty window) yield all subsets of $[k-1]$ (in Gray code order).
Since $\ell\ge2^{k-1}$, under $\phi$ we obtain all those colors from windows
ending at position $2^{k-1}$, containing $k$.
\end{proof}

When $\ell<2^{k-1}$, the situation is more delicate.  We need the color on a
window with largest element $k$ and length at most $\ell$ to arise also from a
window ending at $k$ that is no longer than $\ell$, so that the color occurs
in the list of colors we have described for $\phi$.  To prove this we use the
next lemma.  Informally it states that if we obtain a window $W$ by trimming
about the same amount from the beginning and end of a window $W'$ starting at
$c_1$, then the color on $W$ is the color on some window starting at $c_1$ that
is no longer than $W$.

\begin{lemma}\label{lem:trim}
Let $\la a\ra$ be the sequence of atoms in $\UU$ obtained from the sequence
$\la c\ra$ of Construction~\ref{constrPnl}.  If $q,r,m$ are nonnegative
integers with $r\le m/2$ and $r\in\{q,q+1\}$, then
${a_{q+1}+\cdots+a_{m-r}}\in\{\VEC s0m\}$, where $s_i$ is the sum of the
first $i$ terms in $\la a\ra$.
\end{lemma}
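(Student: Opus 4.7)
The plan is to rewrite the sum $a_{q+1}+\cdots+a_{m-r}$ as a difference of partial sums $s_{m-r}+s_q$, and then invoke a standard identity for the binary reflected Gray code to identify this difference with some $s_i$ having $i\le m$.

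First, by the telescoping definition of $s_i$, we have $a_{q+1}+\cdots+a_{m-r}=s_{m-r}+s_q$ in $\UU$ (interpreting the empty sum as $0$, which matches $s_q+s_q=0$ when $q=m-r$). The hypothesis $r\le m/2$ together with $r\in\{q,q+1\}$ ensures $q\le m-r$, so this identification is well-defined. It therefore suffices to exhibit some $i\in\{0,1,\ldots,m\}$ with $s_i=s_{m-r}+s_q$.

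Next I would recall the explicit bit-formula for the binary reflected Gray code: identifying each vector in $\UU$ with the nonnegative integer whose binary expansion records its support, one has $s_i=i\oplus\lfloor i/2\rfloor$, where $\oplus$ denotes bitwise XOR. Since XOR is $\FF_2$-linear and commutes with the right-shift, this immediately yields
$$s_a+s_b=s_{a\oplus b}$$
for all nonnegative integers $a,b$. Applied with $a=m-r$ and $b=q$, it gives $s_{m-r}+s_q=s_{(m-r)\oplus q}$.

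Finally, since bitwise XOR never produces carries, $x\oplus y\le x+y$ for all nonnegative integers $x,y$. Hence $(m-r)\oplus q\le(m-r)+q=m-(r-q)\le m$, using $r-q\in\{0,1\}$. Setting $i=(m-r)\oplus q$ completes the proof. The main conceptual step is the Gray-code identity $s_a+s_b=s_{a\oplus b}$; once that is in hand, everything else reduces to a one-line bitwise estimate, and no recursive or palindromic properties of $\la c\ra$ are required.
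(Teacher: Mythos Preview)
Your argument is correct and considerably cleaner than the paper's own proof. The key identity $s_a+s_b=s_{a\oplus b}$ follows exactly as you say: the right-shift map $x\mapsto\lfloor x/2\rfloor$ is $\FF_2$-linear on bit-strings, so $i\mapsto i\oplus\lfloor i/2\rfloor$ is a group endomorphism of $(\ZZ_{\ge0},\oplus)$. Once this is in hand, the telescoping step $a_{q+1}+\cdots+a_{m-r}=s_{m-r}+s_q=s_{(m-r)\oplus q}$ and the no-carry bound $(m-r)\oplus q\le(m-r)+q\le m$ finish the job immediately.

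The paper, by contrast, proceeds by induction on $r$: it deletes all copies of $1$ from the window (equivalently, applies the shift map $e_j\mapsto e_{j-1}$, $e_1\mapsto 0$) to reduce to a smaller instance $(q',r',m')$ with roughly halved parameters, then reconstructs the original sum from the reduced one with a parity case analysis. This works but requires careful bookkeeping to verify $r'\in\{q',q'+1\}$ and $r'\le m'/2$, plus a separate argument to rule out a boundary case when $m$ is even. Your approach bypasses all of this by exploiting the closed-form Gray-code formula directly; the only thing you might add for completeness is a one-line verification that the paper's recursive definition of $s_i$ (via $s_i=s_{i-1}+e_{c_i}$ with $c_i$ the ruler sequence) does coincide with $i\oplus\lfloor i/2\rfloor$, since the paper never states this explicitly.
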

\begin{proof}
Let $B= a_{q+1} + \dots + a_{m-r}$.  We use induction on $r$.

If $r=0$, then $q=0$ and $B = s_m$.  If $r=1$ and $q=0$, then $B=s_{m-1}$.
When $r=1$ and $q=1$, we consider two cases.
If $c_m = 1$, then $B = a_2 + \dots + a_{m-1} = a_1 + \dots + a_m = s_m$.
If $c_m \ne 1$, then $c_{m-1} = 1$, and $B = a_1+\dots+a_{m-2} = s_{m-2}$.

For the induction step, suppose $r \geq 2$.  Let $x$ count the appearances
of $1$ in the list $(\VEC c1q)$, so $x = \CL{q/2}$.  Similarly, let $y$ count
the appearances of $1$ in $(\VEC c{m-r+1}m)$, so $y$ is $\CL{r/2}$ when $m$
is odd and is $\FL{r/2}$ when $m$ is even.  Let $q' = q-x = \FL{q/2}$ and
$r' = r-y$, so $r'\in\{\FL{r/2},\CL{r/2}\}$.  We will use an instance of the
claim with $q'$ and $r'$ in place of $q$ and $r$.  Clearly $q'$ and $r'$ are
nonnegative.

Since $y\le\CL{r/2}$, we have
$r'=r-y\ge\FL{r/2}\ge\FL{q/2} = q'$.  If $r=q$ or $q$ is even, then
$y\ge\FL{r/2}$ yields $r'=r-y\le\CL{r/2}\le\FL{r/2}+1\le\FL{q/2}+1=q'+1$.
In the remaining case, $r = q+1$ and $q$ is odd, so $y=r/2$, and
$r'=r/2=\FL{q/2}+1=q'+1$.  Thus $r'\in\{q',q'+1\}$.  Also, since $r\ge2$
we have $y\ge\FL{r/2}\ge1$ and $r'<r$.

We will apply the induction hypothesis to the instance $(q',r',m')$
with $m'=\FL{m/2}$.  We need $r'\le m'/2$.  If $m$ is odd, then
$y=\CL{r/2}$ and $m'=(m-1)/2$, and we have
$r'=\FL{r/2}\le\FL{(m-1)/4}=\FL{m'/2}$.  If $m$ is even,
then $y=\FL{r/2}$ and $m'=m/2$, and we have
$r'=\CL{r/2}\le\CL{m/4}=\CL{m'/2}$.  This satisfies the criteria for the
parameters unless $m$ is an odd multiple of $2$ and $r=m/2$.  Since
$q\in\{r,r-1\}$, in that special case $B\in\{s_0,s_1\}$ and the claim holds.
Hence we may assume $r'\le m'/2$, and the induction hypothesis applies.

With $B'=\VECOP a{q'+1}{m'-r'}+$, the induction hypothesis yields
$B'\in \{\VEC s0{m'}\}$.  Let $B'=s_h$ with $h\le m'$.
We next compare $B$ and $B'$.

Let $\la d\ra$ be the sequence obtained from $\la c\ra$ by deleting all
copies of $1$ and then reducing all remaining entries by $1$.
The sequence $\la d\ra$ has $j$ in position $i$ when $\la c\ra$ has $j+1$ in 
position $2i$, which occurs when $i$ is an odd multiple of $2^{j-1}$,
so both $d_i=c_{2i}-1$ and $\la d\ra = \la c\ra$ are true.
This also tells us that $s_{2i}$ is obtained from $s_i$ by shifting the
position of each nonzero coordinate up by $1$ and then adding $e_1$ if and only
if $i$ is odd.

In order to compare $B'$ with $B$, let $f$ be the linear map on $\UU$ that maps
$e_1$ to the $0$-vector and maps $e_j$ to $e_{j-1}$ for $j\ge2$.
We claim $B'=f(\VECOP a{q+1}{m-r}+)=f(B)$, because the contribution from
odd-indexed terms in the sum $B$ is ignored.  More precisely, the first
contribution to $B'$ is from $a_{2q'+2}$, which is $a_{q+1}$ when $q$ is odd
and is $a_{q+2}$ when $q$ is even.  When $q$ is even, the contribution from
$a_{q+1}$ to $B$ is $e_1$ and is ignored under $f$.  Similarly, the last
even-indexed contribution is from $a_{2(m'-r')}$, which is $a_{m-r}$ when $m$
and $r$ have the same parity and is $a_{m-r-1}$ when they have opposite parity.
The case of opposite parity is no problem because $f$ then ignores
the contribution of $e_1$ from $a_{m-r}$.

Thus the even-indexed terms in $B$ correspond to the terms in $B'$, with
the nonzero coordinate positions for such a term in $B$ augmented by $1$
in comparison to $B'$.  Their sum in $B$ is $s_{2h}$ if $h$ is even, and it is
$s_{2h}+e_1$ if $h$ is odd.  However, the number of copies of $e_1$ in the sum
$B$ may still have either parity.  Thus the value of $B$ may still be $s_{2h}$
or $s_{2h}+e_1$.  Since $2h$ is even, $s_{2h}+e_1=s_{2h+1}$.  Therefore,
we obtain $B\in\{\VEC s0m\}$ if $2h+1\le m$.  With $h\le m'=\FL{m/2}$, our only
worry is when $m$ is even, $h=m/2$, and $B=s_m+e_1$.

It thus suffices to show that $B$ and $s_m$ cannot differ by $e_1$ when
$m$ is even.  Note that the weight of the color on any window (the number of
nonzero coordinates in the corresponding vector in $\UU$) has the same parity
as the length of the window.  Since $m$ is even, $s_m$ has even weight.  If
$r=q$, then also $B$ has even weight.

We may therefore assume $r=q+1$.  Since $m$ is even, $a_m\ne e_1$, so
$y=\FL{r/2}$.  With $x=\CL{q/2}$ and $r=q+1$, we have $x=y$.  Hence the numbers
of $1$s in $(\VEC c1m)$ and $(\VEC c{q+1}{m-r})$ have the same parity.  Thus
again $B$ and $s_m$ cannot differ by $e_1$, and the problematic case does not
arise.
\end{proof}

\begin{lemma}\label{lem:differences}
If $k$ is the largest entry in a window $W$ in $\la c\ra$ corresponding to an
edge $v_iv_j$ in $P_n^\ell$, then the color $\phi(v_iv_j)$ is also the color
assigned to some edge $v_{i'}v_{j'}$ with $j'=2^{k-1}$ and $j'-\ell\le i'<j'$.
(The window corresponding to $v_{i'}v_{j'}$ has last entry $k$.)
\end{lemma}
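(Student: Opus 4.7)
The plan is to reduce the statement, via translation and two layers of palindromic reflection, to a direct application of Lemma~\ref{lem:trim}.

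First I would normalize the window by translation. Let $p$ be the (unique, by Lemma~\ref{morek}) position of $k$ inside the window; then $c_p=k$ forces $p=(2\mu-1)2^{k-1}$ for some $\mu\ge 1$. A short $2$-adic computation using $\nu_2(p)=k-1$ and $\nu_2(r)<k-1$ for $0<|r|<2^{k-1}$ gives $c_{p+r}=c_{2^{k-1}+r}$ for all such $r$. Hence, on setting $\alpha=p-i$ and $\beta=j-p$ (so $\alpha\ge 1$, $\beta\ge 0$, $\alpha+\beta\le\ell$), the color of $(c_{i+1},\ldots,c_j)$ equals the color of $[2^{k-1}-\alpha+1,\,2^{k-1}+\beta]$. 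Writing $t=2^{k-1}$, the goal reduces to finding $\gamma\in[1,\ell]$ with
\[
s_{t-\alpha}+s_{t+\beta}=s_{t-\gamma}+s_t.
\]

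Next I would simplify both sides using palindrome identities. Since $A_k=A_{k-1},k,A_{k-1}$, we have $a_{t+\tau}=a_{t-\tau}$ for $1\le\tau\le t-1$, so telescoping yields $s_{t+\beta}+s_t=s_{t-1}+s_{t-\beta-1}$. Applying the palindrome identity $a_{t-\tau}=a_\tau$ of $A_{k-1}$ repeatedly then gives $s_{t-1}+s_{t-\beta-1}=s_\beta$, and similarly $s_{t-\alpha}+s_{t-1}=s_{\alpha-1}$ and $s_{t-\gamma}+s_{t-1}=s_{\gamma-1}$. Substituting and cancelling $s_{t-1}$ reduces the displayed equation to
\[
s_{\gamma-1}=s_{\alpha-1}+s_\beta.
\]

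Finally, this identity is tailor-made for Lemma~\ref{lem:trim}. Without loss of generality $\alpha-1\le\beta$ (otherwise swap the two summands). Apply the lemma with $q=r=\alpha-1$ and $m=\alpha+\beta-1$: the hypotheses $r\in\{q,q+1\}$ and $r\le m/2$ both follow from $\alpha-1\le\beta$, and the conclusion is $s_{\alpha-1}+s_\beta=s_h$ for some $h\in[0,\alpha+\beta-1]$. Setting $\gamma=h+1$ gives $1\le\gamma\le\alpha+\beta\le\ell$. The additional constraint $\gamma\le t$ (needed so $i'=t-\gamma\ge 0$) holds automatically: since $\alpha-1,\beta<t$, the vector $s_{\alpha-1}+s_\beta$ has a $0$ in coordinate $k$, whereas any $s_h$ with $h\ge t$ has a $1$ there, forcing $h<t$.

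The main obstacle is executing the second step cleanly: one must first use the outer palindrome of $A_k$ to fold the right half of the window back into $A_{k-1}$, and then invoke the inner palindrome of $A_{k-1}$ several times to convert partial sums at positions near $t-1$ into partial sums at positions near $0$. Once these three reductions are collected, the remaining appeal to Lemma~\ref{lem:trim} is immediate.
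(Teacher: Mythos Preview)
Your proof is correct and takes essentially the same approach as the paper's: both translate the window to the standard copy of $A_k$, use the palindrome structure to reduce to an identity $s_{\alpha-1}+s_\beta=s_h$ (the paper phrases this reduction as cancelling matching windows of length $q=\alpha-1$ on each side of position $2^{k-1}$), and then apply Lemma~\ref{lem:trim} with $q=r=\min\{\alpha-1,\beta\}$ and $m=\alpha+\beta-1$. The only cosmetic difference is that the paper first invokes Lemma~\ref{smallk} to dispose of the case $\ell\ge2^{k-1}$, whereas you handle that case uniformly via your coordinate-$k$ argument at the end.
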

\begin{proof}
By Lemma~\ref{smallk}, we may assume $\ell<2^{k-1}$, so the length of
$W$ is less than $2^{k-1}$.  As noted before Lemma~\ref{smallk},
we may assume that $W$ is contained in $A_k$ and is the window
$(\VEC c{i+1}j)$ with $i<2^{k-1}<j<2^k$.  By symmetry, we may assume
$2^{k-1}-i\le j-2^{k-1}$.

Recall that we have also expressed the first $2^k-1$ entries in $\la c\ra$
as $A_k$, where $A_k$ is the concatenation $A_{k-1},k,A_{k-1}$.  Inductively,
$A_k$ is unchanged by reversal, so the initial portion of the second copy of
$A_{k-1}$ is the reversal of the window $(\VEC c{i+1}{2^{k-1}-1})$ that ends
the first copy of $A_{k-1}$.

Let $q=2^{k-1}-1-i$, and let $m=j+q-2^{k-1}$.  Since $j-i\le\ell<2^{k-1}$,
we have $m=j-i-1<2^{k-1}-1=|A_{k-1}|$.  After canceling the contributions
from the windows of length $q$ in each direction from position $2^{k-1}$ and
indexing the remaining positions within $A_{k-1}$, we have
$\phi(W)=e_k+(\VECOP a{q+1}{m-q}+)$.  By Lemma~\ref{lem:trim},
$\VECOP a{q+1}{m-q}+\in\{\VEC s0m\}$.  Since $m<2^{k-1}$ and the copy
of $A_{k-1}$ preceding position $2^{k-1}$ generates Gray code reading from
the right, we have $\phi(W)$ equal to the color on a window of length at
most $\ell$ ending at position $2^{k-1}$.
\end{proof}

\begin{theorem}\label{pathupper}
For integers $n$ and $\ell$ with $1\le\ell\le\CL{\lg n}$,
$$\spec(P_n^{\ell}) < \ell\CL{\lg n}-\ell(\FL{\lg\ell}-1).  $$
\end{theorem}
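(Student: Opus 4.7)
The plan is to establish the bound by analyzing the canonical edge-coloring $\phi$ of $P_n^\ell$ from Construction~\ref{constrPnl}, which is already known to be a spec by Observation~\ref{obs:canon}. So the task reduces entirely to bounding $|\phi|$ from above, and the real work is just to count distinct colors carefully.

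The first step is to partition the colors of $\phi$ according to the largest element $k$ appearing in them (viewing colors as subsets of $[\CL{\lg n}]$ as in Remark~\ref{rem:colors}). By Lemma~\ref{morek}, that largest element occurs exactly once in any window realizing the color, and by Lemma~\ref{lem:differences}, every color whose largest element is $k$ is realized by some window $(\VEC c{i+1}{2^{k-1}})$ ending at position $2^{k-1}$ with $2^{k-1}-\ell \le i < 2^{k-1}$. The number of such windows is therefore at most $\min\{\ell,2^{k-1}\}$. Since $k$ can range only over $\{1,\ldots,\CL{\lg n}\}$ (as $2^{k-1}$ must be at most $n-1$), summing over $k$ yields
\[
|\phi| \le \sum_{k=1}^{\CL{\lg n}} \min\{\ell,2^{k-1}\}.
\]

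The last step is a routine split of this sum at the threshold $k=\FL{\lg\ell}+1$. The geometric portion gives $\sum_{k=1}^{\FL{\lg\ell}+1} 2^{k-1}=2^{\FL{\lg\ell}+1}-1\le 2\ell-1$, and the constant portion gives $(\CL{\lg n}-\FL{\lg\ell}-1)\ell$. Adding these and rearranging produces
\[
|\phi|\le \ell\CL{\lg n} - \ell(\FL{\lg\ell}-1) + (2^{\FL{\lg\ell}+1}-2\ell-1),
\]
and since $2^{\FL{\lg\ell}+1}\le 2\ell$, the final parenthesized correction is at most $-1$, giving the strict inequality. There is no serious obstacle: all the heavy lifting was done in Lemmas~\ref{morek}, \ref{lem:trim}, and~\ref{lem:differences}, and what remains is only the bookkeeping described above.
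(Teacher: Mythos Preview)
Your proposal is correct and follows essentially the same approach as the paper's own proof: both use the canonical Gray-code coloring $\phi$ from Construction~\ref{constrPnl}, invoke Lemmas~\ref{morek} and~\ref{lem:differences} to bound the number of colors with largest element $k$ by $\min\{\ell,2^{k-1}\}$, split the resulting sum at $k=\FL{\lg\ell}+1$, and use $2^{\FL{\lg\ell}+1}\le 2\ell$ to obtain the strict inequality. The arithmetic and the logical structure are identical to the paper's argument.
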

\begin{proof}
We give to $P_n^\ell$ the canonical coloring $\phi$ based on Gray code as
described in Construction~\ref{constrPnl}.  By
Lemmas~\ref{morek}--\ref{lem:differences},
there are exactly $\min\{\ell,2^{k-1}\}$ colors in which $k$ is the largest
nonzero coordinate, for $1\le k\le\CL{\lg n}$, and these are all the colors.
Hence
\begin{align*}
\spec(P_n^\ell)
&\le\SE k1{1+\FL{\lg\ell}} 2^{k-1}+\SE k{2+\FL{\lg\ell}}{\CL{\lg n}}\ell\\
&=\ell(\CL{\lg n}-1-\FL{\lg\ell})+2^{1+\FL{\lg\ell}}-1\\
&\le \ell\CL{\lg n}-\ell\FL{\lg\ell}+\ell-1.
\end{align*}

\vspace{-2pc}
\end{proof}

\medskip
Our upper and lower bounds on $\spec(P_n^\ell)$ differ by a quadratic
function of $\ell$, independent of $n$.  We conjecture that the upper
bound is the truth.

\begin{conjecture}
The canonical edge-coloring $\phi$ defined using Gray code in
Construction~\ref{constrPnl} is an optimal spec of $P_n^\ell$.
\end{conjecture}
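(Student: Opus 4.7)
The plan is to establish a matching lower bound, since Theorem~\ref{pathupper} already supplies the upper bound. By Theorem~\ref{main} it suffices to show that every canonical edge-coloring of $P_n^\ell$ uses at least $\sum_{k=1}^{\CL{\lg n}}\min\{\ell,2^{k-1}\}$ colors. Setting $g_i=f(v_{i-1})+f(v_i)$ for the generating labeling $f\st V(P_n^\ell)\to\FF_2^d$, the colors become the window sums $g_{i+1}+\cdots+g_j$ with $1\le j-i\le\ell$, and injectivity of $f$ amounts to the partial sums $s_0,s_1,\ldots,s_{n-1}$ being pairwise distinct. The conjecture thus reduces to the combinatorial claim that every such sequence $(g_1,\ldots,g_{n-1})$ produces at least $\sum_{k=1}^{\CL{\lg n}}\min\{\ell,2^{k-1}\}$ distinct window sums of length in $[1,\ell]$.

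I would proceed by induction on $r=\CL{\lg n}$, with the base $r=1$ trivial. Split the vertex set into halves $L$ and $R$ of sizes approximately $n/2$; each induces a power-of-path graph on fewer vertices (which collapses to a complete graph when $\ell$ is large relative to the half's size, matching the inductive formula since $\sum_{k=1}^{r-1}2^{k-1}=2^{r-1}-1=\spec(K_{2^{r-1}})$). The inductive hypothesis applied to the restrictions $f|_L$ and $f|_R$ bounds the number of colors used within each half, and the remaining task is to show that the edges bridging $L$ and $R$ contribute at least $\min\{\ell,2^{r-1}\}$ colors that do not appear within either half.

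This ``separating the crossings'' step is where I expect the main difficulty. The within-half color sets can coincide almost entirely: for example, if $f|_R$ is a translate of $f|_L$ by a fixed vector, then the two sets are identical, so simply doubling the inductive bound fails. One must therefore set up a trade-off argument — either the union of within-half colors already exceeds $\sum_{k=1}^{r-1}\min\{\ell,2^{k-1}\}$ by the required margin, or the structural rigidity needed for near-coincidence of those two sets forces the crossing edges to supply many distinct new colors. An alternative approach worth pursuing in parallel is the cycle-image framework of Remark~\ref{cycimage}: since the cycle space of $P_n^\ell$ has dimension $(\ell-1)n-\binom{\ell+1}{2}+1$, one can try to exhibit a multiscale family of cycles (for instance, triangles $v_iv_{i+1}v_{i+d}$ with $d\in\{2,\ldots,\ell\}$ and $i$ ranging over dyadic scales, so that each new cycle contributes a fresh basis vector to $\cC_\phi$) that are linearly independent and number at least $\sum_{k=1}^{\CL{\lg n}}\min\{\ell,2^{k-1}\}-\CL{\lg n}$, at which point Lemma~\ref{lem:Sr} finishes the proof.
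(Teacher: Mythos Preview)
This statement is labeled a \emph{Conjecture} in the paper, and the paper supplies no proof of it. The paper's actual lower bound on $\spec(P_n^\ell)$ is Theorem~\ref{thm:path}, which gives $\spec(P_n^\ell)>\ell\CL{\lg n}-\binom{\ell+1}{2}$; this falls short of the Gray-code count $\sum_{k=1}^{\CL{\lg n}}\min\{\ell,2^{k-1}\}$ by a term of order $\ell^2$, and the paragraph preceding the conjecture states explicitly that the authors believe the upper bound to be the truth but do not know how to close the gap. So there is no paper proof to compare your proposal against.

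As for the proposal itself: it is a plan, not a proof, and you have correctly located the obstruction. The reduction via Theorem~\ref{main} to canonical edge-colorings and the reformulation in terms of window sums of a sequence with distinct partial sums are both fine. But the inductive scheme hinges entirely on the ``crossings'' step, and the translate example you give shows that neither half alone can force more than the inductive quantity; you then assert that ``structural rigidity'' in the near-coincidence case must force many new crossing colors, without any mechanism for extracting them. That is precisely the missing idea, not a routine verification. The alternative via Remark~\ref{cycimage} has the same status: you would need to exhibit, for an \emph{arbitrary} canonical coloring $\phi$, a family of cycles whose $\phi$-images are linearly independent and number at least $\sum_k\min\{\ell,2^{k-1}\}-\CL{\lg n}$, but the triangles $v_iv_{i+1}v_{i+d}$ you propose have images $g_{i+1}+(g_{i+2}+\cdots+g_{i+d})+(g_{i+1}+\cdots+g_{i+d})$ that depend entirely on the unknown $g$-sequence, and no argument is given that a dyadic selection of them is independent. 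Until one of these two gaps is filled, the conjecture remains open.
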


\section{Bipartite Graphs with Large $\spec(G)$}
In this section we use the tools of Sections~\ref{sec:lower}
and~\ref{sec:pathpower} to construct bipartite graphs $G$ with $\spec(G)>p(G)$.

\begin{lemma}\label{lem:cut-edge}
If $G_1$ and $G_2$ are the components of $G-e$ in a connected graph having
a cut-edge $e$, then $p(G) \leq 1 + \max \{p(G_1), p(G_2)\}$.
\end{lemma}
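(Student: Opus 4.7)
The plan is to build a pec of $G$ directly from optimal pecs of $G_1$ and $G_2$, using the cut-edge $e$ as a distinguished color to protect any path that crosses between the two sides.

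First I would fix optimal pecs $\phi_1$ of $G_1$ and $\phi_2$ of $G_2$ using $p(G_1)$ and $p(G_2)$ colors respectively. Since colors are just labels, I can relabel so that the two color palettes are both contained in a common set $C$ of size $\max\{p(G_1),p(G_2)\}$. I then introduce one fresh color $c^*\notin C$ and define an edge-coloring $\phi$ of $G$ by $\phi|_{E(G_1)}=\phi_1$, $\phi|_{E(G_2)}=\phi_2$, and $\phi(e)=c^*$. This uses at most $1+\max\{p(G_1),p(G_2)\}$ colors, so it only remains to verify that $\phi$ is a pec.

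The verification splits into cases by what a candidate path $P$ in $G$ looks like. If $P$ lies entirely in $G_1$, then $\phi$ agrees with $\phi_1$ on $P$, so $\phi_1$ being a pec gives some color appearing an odd number of times on $P$; the case $P\esub G_2$ is symmetric. The only remaining case is that $P$ crosses the cut $e$, and this is where the cut-edge hypothesis does the real work: because $e$ is a cut-edge, removing it disconnects $G_1$ from $G_2$, so any path of $G$ using vertices from both sides must traverse $e$, and a path traverses any edge at most once. Hence $e$ occurs exactly once along $P$, so the color $c^*$ appears exactly once — an odd number — on $P$, certifying that $P$ is not a parity path.

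There is not really a hard step; the only thing to double-check is that reusing colors across $G_1$ and $G_2$ cannot create a parity path, and the cut-edge argument above rules this out since any path mixing edges from $E(G_1)$ and $E(G_2)$ must carry the unique color $c^*$ once. Assembling these observations yields $p(G)\le 1+\max\{p(G_1),p(G_2)\}$, as claimed.
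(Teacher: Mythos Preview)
Your proof is correct and follows essentially the same approach as the paper: build a pec of $G$ from pecs of $G_1$ and $G_2$ sharing a palette of size $\max\{p(G_1),p(G_2)\}$, give $e$ a fresh color, and observe that any path either stays in one $G_i$ or uses $e$ exactly once.
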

\begin{proof}
Let $k=\max \{p(G_1), p(G_2)\}$, and let $\phi$ be an edge-coloring of $G$ that
assigns a pec to $G_1$ using colors $1,\dots,k$, assigns a pec to $G_2$ using
colors $1,\dots,k$, and assigns $\phi(e) = k+1$.  Any path containing $e$ is
not a parity path, since $e$ is the only edge of color $k+1$.  Any other path
is confined to $G_1$ or $G_2$, on which $\phi$ restricts to a pec.  Hence
$\phi$ is a pec on $G$.
\end{proof}

We next use Lemma~\ref{lem:density} to disprove Conjecture~\ref{conj:bip}.
\begin{theorem}\label{thm:ex}
For each $n$ that is an even power of $2$, there is a bipartite graph $G$ such
that $p(G) = n+1$ and $\spec(G)> n-\lg n+(\lg n)^2/4$.
\end{theorem}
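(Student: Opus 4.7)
The plan is to construct $G$ explicitly as a bipartite graph obtained by attaching a small auxiliary bipartite structure $H$ to a copy of $K_{n,n}$ via a cut-edge, then verify the two bounds separately. With $n = 2^{2m}$ (so $\lg n = 2m$), I would take $H$ to be on the order of $\lg n$ vertices, structured as a small bipartite path-power so that $p(H) \le n$ is automatic and the vertices of $H$ admit an ordering whose back-degrees mimic those exploited in the proof of Theorem~\ref{thm:path}. The role of $H$ is to force additional colors in any spec while contributing only one extra color to a pec.

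For the upper bound $p(G) \le n+1$: Theorem~\ref{thm:Kst} gives $\spec(K_{n,n}) = 2^{\CL{\lg n}} = n$ since $n$ is a power of $2$, and combined with $\chi'(K_{n,n}) = n$ this forces $p(K_{n,n}) = n$. For the chosen small $H$, $p(H) \le n$ is easy, so Lemma~\ref{lem:cut-edge} yields $p(G) \le 1 + \max(p(K_{n,n}), p(H)) = n+1$. The matching lower bound $p(G) \ge n+1$ holds because the cut-edge raises one $K_{n,n}$-vertex's degree to $n+1$, giving $\chi'(G) \ge n+1$.

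For the lower bound $\spec(G) > n - \lg n + (\lg n)^2/4$, I combine two algebraic arguments. Since $K_{n,n}\subseteq G$, the restriction of the canonical labeling $f\colon V(G)\to\UU$ to $V(K_{n,n})$ yields a sum-set of size at least $n$ by Theorem~\ref{thm:circ}. Additionally, $V(H)$ must receive injective labels distinct from $V(K_{n,n})$'s labels, and the edges of $H$ plus the cut-edge contribute further colors. Applying Lemma~\ref{l:lowerBound} with an ordering that places $V(H)$ at the start (so the back-degrees of the first $\lg n$ vertices form the path-power-style sequence $0, 1, 1, 2, 2, \ldots, m, m$, summing to $(m+1)^2 = (\lg n)^2/4 + \lg n + 1$) shows that the $H$-edges force roughly $(\lg n)^2/4$ additional colors, of which at most $\lg n$ can coincide with those already in the $n$-element sum-set on $K_{n,n}$.

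The main obstacle will be rigorously combining these two contributions into a single lower bound exceeding $n + (\lg n)^2/4 - \lg n$. The Hopf--Stiefel bound and the back-degree bound are both lower bounds on $\spec(G)$, so naive combination only yields their maximum; to add them requires an algebraic argument (via Theorem~\ref{mainq}) that at most $\lg n$ of the colors forced by $H$ through the back-degree analysis can lie inside the $n$-element color set generated by $K_{n,n}$, leaving the rest to add genuinely new colors. The even-power-of-$2$ hypothesis on $n$ is used precisely because $m = \lg n / 2$ must be an integer so that the path-power back-degree sum matches $(m+1)^2$ exactly, producing the clean term $(\lg n)^2/4$ in the theorem statement.
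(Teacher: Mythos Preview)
Your construction is the wrong one, and the gap you identify as ``the main obstacle'' is genuine and unresolved.  You attach a \emph{small} auxiliary graph $H$ (on the order of $\lg n$ vertices) to $K_{n,n}$; then, as you correctly note, the Hopf--Stiefel bound on $K_{n,n}$ and the back-degree bound on $H$ are separate lower bounds on $\spec(G)$, and taking their maximum gives only $n$.  Your proposed fix---arguing via Theorem~\ref{mainq} that at most $\lg n$ of the $H$-colors can coincide with $K_{n,n}$-colors---is not justified, and in fact there is no obvious reason it should hold: with a small $H$, the labels $f|_{V(H)}$ could in principle be chosen so that many of the resulting edge-colors fall inside the size-$n$ sum-set already used on $K_{n,n}$.

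The paper's construction sidesteps this entirely by taking $H$ to be a \emph{path on $2^n$ vertices}.  Then $|V(G)|>2^n$, so every saturating set $T$ has $|T|\ge n+1$, and the full lower bound comes from a \emph{single} application of Lemma~\ref{l:lowerBound}---no combining of two separate bounds is needed.  Concretely, order $V(G)$ with the $K_{n,n}$ vertices first (alternating sides, so $d^-(v_i)=\lfloor i/2\rfloor$ for $i\le 2n$) and the path afterwards (each with back-degree $1$).  The saturating condition at $k=2n$ forces $|T\cap\{v_2,\ldots,v_{2n}\}|\ge 1+\lg n$; the minimum sum of that many back-degrees among $1,1,2,2,\ldots$ exceeds $(\lg n)^2/4$.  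The remaining $\ge n-\lg n$ elements of $T$ each contribute back-degree at least $1$, yielding $\spec(G)>n-\lg n+(\lg n)^2/4$.  For $p(G)=n+1$, the argument is the one you gave: $p(K_{n,n})=n$, $p(P_{2^n})=n$, and Lemma~\ref{lem:cut-edge}.  The key idea you are missing is that the \emph{long path} supplies the linear term $n$ directly through the saturating-set size, not through any sum-set argument.
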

\begin{proof}
Form $G$ from the disjoint union of $K_{n,n}$ and a path $P$ with $2^n$
vertices by adding a cut-edge $e$ consist of a vertex of $K_{n,n}$ and an
endpoint of $P$.  By Proposition~\ref{lgn} and Lemma~\ref{lem:cut-edge},
$p(G) = n+1$.

Next we consider the lower bound on $\spec(G)$.  Let $(X,Y)$ be the bipartition
of $K_{n,n}$, with $X=\{\VEC x1n\}$ and $Y=\{\VEC y1n\}$.  Let $\VEC u1{2^n}$
be the vertices of $P$ in order, with $v_1$ being the endpoint of $e$ in $P$.
Order $V(G)$ as $\VEC v1{2n+2^n}$ with $v_{2j-1}=x_j$ and $v_{2j}=y_j$ for
$1\le j\le n$, followed by $\VEC u1{2^n}$ in order.  With this ordering,
$d^-(v_i)=\lfloor i/2 \rfloor$ for $1\le i \le 2n$.  For even $t$, the sum of
any $t$ backdegrees is at least $2\SE i1{t/2} i$, which exceeds $t^2/4$.

Let $I$ be a saturating set.  Thus $|I\cap (X\cup Y)|\ge 1+\lg n$, and
the sum of the back-degrees of these vertices exceeds $(\lg n)^2/4$.
Also, $|I|$ is at least $\CL{\lg(2^n+2n)}$, which is $n+1$.  Each member of $I$
has back-degree at least $1$, and we have already accounted for the 
back-degrees of $1+\lg n$ vertices.  By Lemma~\ref{lem:density}, we
obtain $\spec(G)> n-\lg n+(\lg n)^2/4$.
\end{proof}

\begin{remark}\label{altbip}
Our purpose in Theorem~\ref{thm:ex} was only to give a quick counterexample
to Conjecture~\ref{conj:bip}.  Because of this and the stronger example in
Construction~\ref{constr}, we did not compute $\spec(G)$ carefully.  In fact,
a stronger lower bound can be obtained by using the lower bound in
Remark~\ref{cycimage} and setting $t=2\CL{\sqrt{|\phi|}}$ in the analysis
of back-degrees in $K_{n,n}$.  With this and an explicit canonical
edge-coloring, for the graph $G$ of Theorem~\ref{thm:ex} we can prove
\[2n - (2+o(1))\sqrt n \leq  \spec(G) \leq 2 n - \lg n+ 1.\]
We omit the details of these bounds.
\end{remark}

Building on the intuition behind the construction in Theorem~\ref{thm:ex},
we next present a more intricate construction to show that the ratio
$\spec(G)/p(G)$ can be arbitrarily large.  More precisely, $\spec(G)$ can be
$\Omega(k\lg k)$ when $p(G)=k$, even when restricted to bipartite graphs.
The main idea, used also in Theorem~\ref{thm:ex}, is that although a pec can
reuse colors without restriction on the two components of $G-e$ when $e$ is a
cut-edge receiving a new color, this fails for a spec.  The reason is that a
set of colors appearing on a cycle in one component of $G-e$ forbids that set
from appearing on a path in the other component of $G-e$, so a spec cannot
simply reuse colors in the two components.
Our construction will exploit this by having many cut-edges.
For each large integer $k$, we construct a bipartite graph $G_k$
such that $p(G_k) \leq 2k+o(k)$ and $\spec(G_k) \ge\s{\frac13-o(1)}{k\ln k}$.

\begin{construction}\label{constr}
We first introduce a bipartite analogue of the path power $P_n^\ell$ studied
in Theorem~\ref{thm:path}.  Given $n,\ell \in \NN$, let $\hat P(n,\ell)$ be the
graph with vertex set $\VEC v1n$ in which $v_i v_j$ is an edge if and only if
$|i-j| \le 2\ell$ and $i \not \equiv j\!\pmod 2$.

Next let $r = \lfloor k^{1/3} \rfloor$.  For $1\le \ell\le r$, let $Z_{\ell}$
be a copy of $\hat P(2^{\lceil k/ \ell \rceil }, \ell)$, and let $Z'_\ell$
be obtained from $Z_\ell$ by adding a vertex $u$ adjacent only to $v_1$.
Let $G_k=\bigcup_{\ell=1}^r Z'_\ell$.  In taking the union, we
use the same vertex $u$ for all $\ell$, so $u$ has degree $r$ in $G_k$.

Observe that $\hat P(n,\ell)\esub P_n^{2\ell}$.
Theorem~\ref{pathupper} now yields
$$
p(\hat P(2^{\CL{k/\ell}},\ell))
\le\spec(P_{2^{\CL{k/\ell}}}^{2\ell})
<2\ell(\CL{k/\ell}-\CL{\lg2\ell}+1)\le2\ell(k/\ell)=2k.
$$
Lemma~\ref{lem:cut-edge} now yields $p(G_k)< 2k+k^{1/3}$.
\end{construction}

We use Lemma~\ref{l:lowerBound} to prove a lower bound on $\spec(G_k)$.
The method of Remark~\ref{cycimage} yields a similar conclusion using
a lot more numerical work.

\begin{theorem}
For the graph $G_k$ in Construction~\ref{constr},
$$	\hat{p}(G) \geq \s{\frac{1}{3}-o(1)}k \ln{k}.$$

\end{theorem}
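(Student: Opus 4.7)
The plan is to apply Lemma~\ref{l:lowerBound} to the vertex ordering of $G_k$ that places $u$ first, then lists the vertices of $Z_r$ in natural path-power order, then those of $Z_{r-1}$, $Z_{r-2}$, and so on, down to $Z_1$. Since $u$ is the only shared vertex of the subgraphs $Z'_\ell$ and $v_1^{(\ell)}$ is adjacent to $u$ in each $Z'_\ell$, every vertex after $u$ has an earlier neighbor. Because the $Z'_\ell$'s are otherwise vertex-disjoint, the back-degree of $v_i^{(\ell)} \in V(Z_\ell)$ depends only on the local index $i$: we have $d^-(v_1^{(\ell)}) = 1$ (the edge to $u$), and for $i \ge 2$ the bipartite-distance-$\le 2\ell$ rule yields $d^-(v_i^{(\ell)}) = \min(\FL{i/2}, \ell)$ after counting opposite-parity neighbors within distance $2\ell$ of $v_i$.

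Fix a saturating set $T$ and write $t_\ell = |T \cap V(Z_\ell)|$. The block of $Z_\ell$ in the ordering ends at some position $m_\ell \ge 2^{\CL{k/\ell}}$, and since only $u$ precedes the $Z_\ell$-blocks and is excluded from the saturating count, $\sum_{\ell'=\ell}^{r} t_{\ell'} \ge \lg m_\ell \ge k/\ell$ for every $\ell$. Since the back-degrees inside $Z_\ell$ form the non-decreasing sequence $1,1,1,2,2,3,3,\ldots,\ell,\ell,\ldots$, the contribution of $T \cap V(Z_\ell)$ to $\sum_{v \in T} d^-(v)$ is at least $C_\ell(t_\ell)$, the sum of the $t_\ell$ smallest back-degrees in $Z_\ell$. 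A routine calculation gives $C_\ell(t) = \Theta(t^2)$ when $t \le 2\ell$ and $C_\ell(t) = t\ell - \ell^2 + O(1)$ when $t \ge 2\ell$.

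Next I would minimize $\sum_{\ell=1}^{r} C_\ell(t_\ell)$ subject to the cumulative constraints above. The marginal cost of an additional vertex in $Z_\ell$ is $\min(\CL{t/2}, \ell)$, non-decreasing in $t$ and capped at $\ell$, so by a standard exchange argument the minimum is attained at the tightest increments $t_\ell = \CL{k/\ell} - \CL{k/(\ell+1)} = \Theta(k/\ell^2)$. Splitting at the threshold $\ell_0 = (k/2)^{1/3}$: for $\ell \le \ell_0$ we have $t_\ell \ge 2\ell$, so $C_\ell(t_\ell) = k/(\ell+1) - \ell^2 + O(1)$ and summing yields $\sum_{\ell=1}^{\ell_0}\bigl(k/(\ell+1) - \ell^2\bigr) = \tfrac{k}{3}\ln k - O(k)$; for $\ell_0 < \ell \le r$ we have $C_\ell(t_\ell) = O((k/\ell^2)^2)$, and these terms sum to $O(k)$. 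Thus $\sum_{v \in T} d^-(v) \ge (\tfrac{1}{3}-o(1))k\ln k$ for every saturating $T$, and Lemma~\ref{l:lowerBound} delivers the claimed bound.

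The main technical obstacle is the convex-optimization verification: one must check that no saturating set gains by over-stocking some $Z_\ell$ (especially $Z_r$, which is encountered first in the ordering) to avoid contributing elsewhere. The comparison hinges on the observation that adding one more vertex to $Z_\ell$ costs at most $\ell$, while it can save at most $\ell-1$ worth of marginal cost in a later, smaller-index block, so the exchange is never strictly profitable. Managing the piecewise transition between the regimes $t < 2\ell$ and $t > 2\ell$ as $\ell$ crosses $\ell_0$ is the fussiest part of the calculation, but the overall structure is dictated by the tight-increment greedy sketched above, and it is the harmonic sum in the small-$\ell$ regime that supplies the logarithmic factor $\tfrac{1}{3}\ln k$.
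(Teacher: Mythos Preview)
Your setup matches the paper's: the same vertex ordering, the correct back-degree computation $d^-(v_i^{(\ell)}) = \min(\lfloor i/2\rfloor,\ell)$, and the correct cumulative constraints $\sum_{\ell'\ge\ell} t_{\ell'}\ge k/\ell$. The gap is in the optimization step.

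Your exchange argument does not establish that the tight allocation $t_\ell = \lceil k/\ell\rceil - \lceil k/(\ell+1)\rceil$ minimizes $\sum_\ell C_\ell(t_\ell)$. You say that adding a vertex to $Z_\ell$ ``costs at most $\ell$'' while removing one from a smaller-index $Z_{\ell'}$ ``can save at most $\ell-1$'', and conclude the swap is never profitable; but comparing two \emph{upper} bounds proves nothing about the sign of the net change. In fact it can be negative: at the tight allocation, for $\ell$ near $r$ one has $t_\ell\approx k/\ell^2<2\ell$, so the marginal cost of adding to $Z_\ell$ is only about $t_\ell/2\approx k/(2\ell^2)\approx k^{1/3}/2$, whereas removing from $Z_{\ell'}$ with $\ell'$ just below $(k/2)^{1/3}\approx 0.79\,k^{1/3}$ saves $\ell'>k^{1/3}/2$. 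This swap is feasible (it only loosens the constraints at levels $\ell'+1,\dots,\ell$) and strictly decreases the objective, so the tight allocation is not the minimizer. There is also a boundary issue: your formula gives $t_r=\Theta(k/r^2)$, which violates the first constraint $t_r\ge k/r$.

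The paper sidesteps the optimization entirely by layering over back-degree rather than over blocks. With $X_d=\{v: d^-(v)\ge d\}$ and $Y_d=\bigcup_{\ell\ge d}V(Z_\ell)$, one has the identity $\sum_{v\in T}d^-(v)=\sum_{d=1}^{r}|T\cap X_d|$, while $Y_d\setminus X_d$ has only $O(rd)$ vertices; applying the saturating condition at the end of the prefix $Y_d$ gives $|T\cap Y_d|\ge k/d$ directly, hence $|T\cap X_d|\ge k/d-O(rd)$. Summing over $d\le r=\lfloor k^{1/3}\rfloor$ yields $\sum_d k/d - O(r^3)=(1/3-o(1))k\ln k$ without any minimization. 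Recasting your argument in this form would close the gap.
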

\begin{proof}
Order the vertices of $G_k$ starting with the universal vertex $u$ followed by
the vertices of subgraphs $\VEC Zr1$ in order, with the vertices of $Z_\ell$
placed in the natural ordering of vertices in the definition of
$\hat{P}(2^{\CL{k/\ell}}, \ell)$.
	
Let $n = |V(G_k)|$, and let $T$ be a saturating set of vertices in $G_k$ with
respect to this order.  By Lemma~\ref{l:lowerBound} it suffices to show 
$\sum_{v \in T}d^-(v) \geq \left(\frac{1}{3}-o(1)\right)k\ln{k}.$

For $1\le d\le r$, let $Y_d=\UE \ell dr V(Z_\ell )$ and
$X_d =\{v\st d^-(v)\ge d\}$.  Note that $\{u\}\cup Y_d$ occupies an initial
segment of the vertex order and that $X_d\esub Y_d$.  Indeed, $X_d$ consists of
all of $Y_d$ except for $2d-2$ vertices at the beginning of each portion of
$Y_d$ occupied by the vertices of one subgraph $Z_\ell$.

Since $T$ is a saturating set, $|T\cap Y_d|\ge\lg|Y_d|$.  In addition,
$|V(Z_\ell)|$ decreases rapidly as $\ell$ increases, so we do not lose much
by using $|Z_d|$ as a lower bound on $|Y_d|$.  In particular,
$$
|T\cap X_d|\ge \lg|Y_d|-(r-d+1)(2d-2)
\ge \CL{\frac kd}-(r-d+1)(2d-2) \geq \frac{k}{d}-2rd.
$$

A vertex in $T\cap X_d$ with back-degree $d$ also lies in each of
$\VEC X1{d-1}$ (and not in $X_{d+1}$).  Hence we can alternatively compute
$\sum_{v\in T}d^-(v)$ by the sum $\SE d1r|T\cap X_d|$.  This yields
\begin{align*}
\sum_{v \in T}d^-(v)
&= \sum_{d=1}^{r}|T \cap X_d| \geq \sum_{d=1}^r\s{ \frac{k}{d}-2rd} \\
&\geq  \sum_{d=1}^r \frac{k}{d} - r^2(r+1)
\geq k \ln\s{\lfloor k^{1/3}\rfloor} - k - k^{2/3} \\
&= \left(\frac{1}{3}-o(1)\right)k \ln{k},
\end{align*}
as desired.
\end{proof}

We conclude that when $k$ is sufficiently large, $p(G_k) = O(k)$, while
$\spec(G_k) = \Omega(k \ln k)$.  It remains unknown how large $\spec(G)$
can be as a function of $p(G)$.

\section*{Acknowledgment}
  
This research was initiated at the 2023 Montreal Graph Theory Workshop.
We thank the organisers and participants for creating a stimulating working
environment.


\begin{thebibliography}{99}
{\small
\frenchspacing
\renewcommand{\baselinestretch}{.92}            %line-spacing
\parskip=.5ex

%\bibitem{A}
%N. Alon, Combinatorial Nullstellensatz.
%{\em Combin., Prob., and Computing} 8 (1999), 7-29.

\bibitem{AGHR}
N. Alon, J. Grytczuk, M. Ha\l uszczak, and O. Riordan,
Nonrepetitive colorings of graphs.
{\em Random Structures Algorithms} 21 (2002), 336--346.

\bibitem{Boll}
B. Bollob\'as, {\it The art of mathematics. Coffee time in Memphis.}
(Cambridge University Press, New York, 2006), page 186.

%\bibitem{BV}
%J. Bar\'at and P. P. Varj\'u, On square-free vertex colorings of graphs.
%{\em Studia Sci. Math. Hungar.} 44 (2007), 411--422.

%\bibitem{BL}
%B. Bollob\'as, and I. Leader, Sums in the grid.
%{\em Discrete Math.} 162 (1996), 31-48.

\bibitem{BMWW}
D. P. Bunde, K. Milans, D. B. West, and H. Wu, 
Parity and strong parity edge-coloring of graphs.
In {\it Proc. 38th SE Intl. Conf. Comb., Graph Th., Comput.}.
{\em Congressus Numer.} 187 (2007), 193--213.

\bibitem{BMWW2}
D. P. Bunde, K. Milans, D. B. West, and H. Wu, 
Optimal strong parity edge-coloring of complete graphs.
{\em Combinatorica} 28 (2008), 625--632.

%\bibitem{EK}
%S. Eliahou and M. Kervaire,
%Sumsets in vector spaces over finite fields.
%{\em J. Number Theory} 71 (1998), 12-39.

\bibitem{EK2}
S. Eliahou and M. Kervaire,
Old and new formulas for the Hopf-Stiefel and related functions.
{\em Expo.\ Math.{}} 23 (2005), 127--145. 

\bibitem{Gil}
E. N. Gilbert, Gray codes and paths on the $n$-cube.
{\em Bell System Tech. J.} 37 (1958), 815--826.

\bibitem{HM}
I. Havel and J. Mor\'avek, $B$-valuation of graphs.
{\em Czech.\ Math.\ J.} 22 (1972), 338--351.

\bibitem{Hopf}
H. Hopf, Ein topologischer Beitrag zur reelen Algebra.
{\em Comment.\ Math.\ Helv.{}} 13 (1940/41), 219--239.

\bibitem{HC}
H.-C. Hsu and G. J. Chang, 
Parity and strong parity edge-colorings of graphs. 
{\em J. Comb.\ Optim.{}} 24 (2012), 427--436.

\bibitem{Kar}
G. K\'arolyi, A note on the Hopf--Stiefel function.
{\em Europ.\ J. Combin.{}} 27 (2006), 1135--1137.

\bibitem{KS}
P. Keevash and B. Sudakov, On a hypergraph Tur\'an problem of Frankl.
{\it Combinatorica}  25  (2005),  673--706.

\bibitem{Pla}
A. Plagne, 
Additive number theory sheds extra light on the Hopf--Stiefel $\circ$ function.
{\em L'Enseignement Math.} 49 (2003), 109--116.

\bibitem{RND}
E. M. Reingold, J.~Nievergelt, and N.~Deo,
{\it Combinatorial Algorithms: Theory and Practice},
(Prentice-Hall, 1977), 173--179.

\bibitem{Ros}
M. Rosenfeld, Another approach to non-repetitive colorings of graphs of 
bounded degree.
{\it Electron. J. Combin.} 27 (2020), Paper \#3.43, 16 pages.

%\bibitem{Sha}
%H. Shapiro, The embedding of graphs in cubes and the design of sequential relay
%circuits, Bell Telephone Laboratories Memorandum, July 1953.

\bibitem{Stie}
E. Stiefel, \"Uber Richtungsfelder in den projektiven R\"aumen und einen Satz
aus der reelen Algebra.
{\it Comment.\ Math.\ Helv.{}} 13 (1940/41), 201--218.

\bibitem{Thue}
A. Thue, \"Uber unendliche Zeichenreihen.
{\em Norske Vid. Selsk. Skr. I. Mat. Nat. Kl. Christiania} 7 (1906), 1--22.

\bibitem{Yuz}
S. Yuzvinsky, 
Orthogonal pairings of Euclidean spaces.
{\em Michigan Math. J.} 28 (1981), 131--145.

%\bibitem{ZhoNis}
%X. Zhou and T. Nishizeki, Finding optimal edge-rankings of trees.
%{\em Proc.\ Sixth ACM-SIAM Symp.\ Disc. Algs. (San Francisco, CA, 1995)},
%(ACM, 1995), 122--131.

}

\end{thebibliography}
\end{document}